\newlist{thmlist}{enumerate}{1}
\setlist[thmlist]{label=(\alph{thmlisti}), ref=\thethm.(\alph{thmlisti}),noitemsep}
\Crefname{thm}{Theorem}{Theorems}
\Crefname{lem}{Lemma}{Lemmas}
\newcommand{\mylabel}[2]{#2\def\@currentlabel{#2}\label{#1}}
\numberwithin{equation}{section} 
\newtheorem{theorem}{Theorem}
\newtheorem{lemma}{Lemma}
\newtheorem{corollary}{Corollary}
\newtheorem{proposition}{Proposition}
\theoremstyle{remark}
\newtheorem{remark}{Remark}[section]
\def \suchthat {\ \big | \ }
\title[Schauder-type estimates for degenerate equations]{Schauder-type estimates for  fully nonlinear degenerate elliptic equations}
\author[T. M. Nascimento]{Thialita M. Nascimento}
\address{Department of Mathematics, Iowa State University, 396 Carver Hall, 50011, Ames, IA, USA}{}
\email{thnasc@iastate.edu}
\subjclass{35J47, 35J60, 35J70}
\keywords{Schauder-type estimates, degenerate elliptic equations, source-ellipticity vanishing rate.}
\begin{document}
\maketitle

\date{} 

\begin{abstract} 
In this paper, we examine regularity estimates for solutions to fully nonlinear,  degenerated elliptic equations, at interior vanishing source points. At these points, we obtain Schauder-type regularity estimates, which depend on the H\"older-like source-ellipticity vanishing rate. 

\tableofcontents

\end{abstract}

\section{Introduction}\label{sec1}

Regularity estimates for degenerate fully nonlinear elliptic equations of the form 
\begin{equation}\label{model fully nonl eq}
        |\nabla u|^{\gamma} F(D^2 u) = f(x)
\end{equation}
have been subject to extensive study since the works of Birindelli and Demengel in a series of papers \cites{BD1, BD2, BD3, BD4, BD5} . It is  quite easy to see why such equations attract so much attention. A large number of physical and social phenomena that involve diffusive processes are modeled by elliptic operators whose ellipticity degenerates  along an {\it a priori} unknown set that depends on the solution itself. From the mathematical point of view,  one of the challenges in the analysis of models like \eqref{model fully nonl eq} lies in the  fact that along the set of critical points, namely  $\mathcal{C}(u) = \{ |\nabla u | = 0 \}$,  one loses smoothness features that come from diffusion efficiency and thus regularity theory for such equations require the development of more sophisticated mathematical tools. 

 The  difficulty of establishing regularity estimates for solutions to these equations along their singular sets is very similar, in nature and analytically,  to free boundary problems.  Indeed, the set of critical points $ \mathcal{C}(u)$ {\it is}  the (non-physical) free boundary of the problem. Moreover, to make an even stronger parallel (see for instance \cite{ART}), equations of the form \eqref{model fully nonl eq} are closely related to a rather general class of  free boundary problems studied in \cite{Caff-Sal}(where the right-hand side is of the form $\chi_{\{ \nabla u \neq 0 \}}$). Therefore, many of the techniques from free boundary problems can be imported to obtain regularity estimates for equations like $\eqref{model fully nonl eq}$. In this regard, the regularity theory of degenerate elliptic equations, only in the past few years, has seen an impressive progress , see e.g, \cite{ART, DFQ, IS} and references therein.

For instance, in the groundbreaking work \cite{IS},  C. Imbert and L. Silvestre proved that solutions to 
$$
    |\nabla u|^{\gamma} F(D^2 u) = f(x)
$$
with $f \in L^{\infty} (B_1)$, are $C_{loc}^{1,\alpha}$, for some $0< \alpha \le \frac{1}{1+\gamma}$.   This regularity is optimal, even if the source term $f$ is H\"older continuous and $F = \Delta$. However, simply saying that solutions are $C^{1,\alpha}$ does not tell the whole story. If fact,  borrowing the example presented  in \cite{IS}, for any $\alpha \in (0,1)$, the function $u(x) = |x|^{1 + \alpha}$ satisfies
$$
    | \nabla u|^{\gamma} \Delta u = C(n, \alpha) |x|^{(1 + \alpha)(1 + \gamma) - (2 + \gamma)}.
$$
 Writing $\theta = (1 + \alpha)(1 + \gamma) - (2 + \gamma)$, one can see that  such exponent $\theta$ is smaller or equal  then the degeneracy rate, i.e. $\gamma.$ Moreover, $u$ is $C^{1 \frac{1 + \theta}{1 + \gamma}}$ at the origin (a critical point for $u$). 
 
 Note, however, that for any given  $\theta > \gamma$,  the function  
\begin{equation}\label{example of hessian cont}
    u(x) = |x_{n}|^{2 + \frac{\theta - \gamma}{1+\gamma}}
\end{equation} is exactly $C^{2, \frac{\theta - \gamma}{1 + \gamma}}$ at the origin and satisfies 
$$
    |\nabla u|^{\gamma} \Delta u = f(x)
$$
where 
$$
    f(x)=C |x_n|^{\gamma + \frac{\gamma(\theta - \gamma)}{1 + \gamma} + \frac{\theta - \gamma}{1 + \gamma}}   =  C |x_n|^{\theta}  \le C|x |^{\theta} .
$$

These examples show that the H\"older continuity of the source function directly influences, {\it in a quantitative way},  the regularity estimates for solutions to degenerate equations. Moreover, they indicate that, at least at certain meaningful points (to be specified later),  we can indeed surpass the $C^{1,\alpha}$ regularity of $u$  if the H\"older exponent of the right-hand side is larger then the growth rate of the gradient term in the equation. In other words, if the source term vanishes faster than ellipticity degenerates. 

In many cases, one is required to obtain fine control on how fast solutions separate from their tangent planes, or how fast it grows away from, say, its zero level set. For instance, such questions are naturally raised in problems of free boundary type. Therefore, knowing the optimal regularity available for solutions is essential. Inspired  by the above examples, in this article we explore the {\it quantitative} influence of the H\"older-like vanishing rate of the source term in the regularity estimates for degenerate elliptic equations of the form 
\begin{equation}\label{main eq intr intr}
    \mathcal{D}(x, \nabla u) F(x, D^2 u) = f(x) \quad \text{in} \quad B_1 \subset \mathbb{R}^n
\end{equation}
where $\mathcal{D}$ is the degeneracy law of the model ($\mathcal{D}(x, \nabla u) \sim |\nabla u|^{\gamma}$ ), $F$ is responsible for the diffusion, and $f$ is a H\"older continuous function.  At an interior vanishing source point, we obtain sharp, higher-order  regularity estimates,  which depend, in addition, on the vanishing rate of the source function $f$ (or the H\"older exponent of $f$, if the readers prefer). 

To better highlight the importance of such regularity improvement, we make a  parallel with the Schauder regularity theory for  elliptic equations.  We recall that Schauder theory asserts that solutions to an elliptic equation, with minimal regularity,  must be as smooth as data permits. For instance,  a viscosity solution to a linear elliptic equation
$$
    a_{ij}(x) \partial_{ij} u = f(x)
$$
with $ 0 < \lambda \le a_{ij}(x) \le \Lambda$, and data $a_{ij} (x) , f \in C^{0,\theta}$, is locally of class $C^{2, \theta}$. Moreover, there exists a constant $C >0$ depending only on ellipticity constants $(\lambda, \Lambda)$, dimension and the $\theta$-H\"older continuity of the data, such that 
$$
    \| u \|_{C^{2,\theta}(B_{1/2})} \le C \| u\|_{L^{\infty}(B_1)} .
$$

Launched by Schauder in the 1930's, this estimate has a vast range of applications in the theory of PDEs and it is considered to be of one the finest treasure in regularity theory. For a comprehensive reading, the curious reader is referred to the classical books \cite[Chapter 6]{GT}, \cite[Chapter 6]{Morrey} and the original paper by J. Schauder \cite{Schauder} .  

 The counterpart of the Schauder theory in the fully nonlinear setting, obtained by Luis Caffarelli  in \cite{Caff, Caff2}, asserts that  under $C^{0,\bar{\alpha}}$ continuity of the coefficients and of the source term $f$, together with $C^{2, \alpha_{F}}$ {\it a priori} estimates, for the constant coefficient model $F(0, D^2 h) = 0$, with $0 < \bar{\alpha} < \alpha_{F}$, one is able to show that solutions to the variable coefficient equation, $F(x, D^2 u) = f(x)$ are $C_{loc}^{2,\bar{\alpha}}$. 

Thus looking at the degenerate nonlinear elliptic model
\begin{equation}\label{degenerate model}
    |\nabla h(x)|^{\gamma} F(x, D^2 h) = f(x),
\end{equation}
and interpreting the term $(|\nabla h(x)|^{\gamma} )$ as if it were part of the coefficients of the correspondent diffusion, one expects that Caffareli's  analysis  applies for such equations, at least along non-degenerate points.  In fact, the heuristics of our results go like this: Away from their set of critical points $\mathcal{C}(h)$, solutions are as regular as solutions to a variable coefficient equation $G(x, D^2 u) = g(x)$. That is, if we can quantify how far a point is from $\mathcal{C}(h)$, then after a proper re-scaling, the equation becomes a uniformly elliptic equation in a domain of fixed sized.  As we approach the set of critical points, $h$ loses its smoothness. However, at a in interior vanishing source point $x_0$, by profiting from a sort cancellation effect (\cite[Lemma 6]{IS}), we may understand equation $\eqref{degenerate model}$ as
\begin{equation}\label{eq for analogy}
    \tilde{F}(x_0, D^2 u(x_0)) = 0 .
\end{equation}
Therefore,  we are able to approximate solutions to \eqref{degenerate model} by solutions to \eqref{eq for analogy} in a small neighborhood of $x_0$, and hence improve the regularity of solutions at such points up to an interpolation between the vanishing rates of $f$ and of the ellipticity. The above discussion describes the dichotomy used to prove our first main result, Theorem \ref{main result}. 

 The rest of the paper is organized as follows: In section \ref{sct Prelim},  we outline the main assumptions on the structure of the equation,   formally state and comment the main results, and  provide insights into their applications.  Section \ref{sct improved gradient reg} is dedicated to proving Theorem \ref{main result} and Theorem \ref{C^{1,1-} reg}.  For this purpose, we follow the strategy of locating the source-vanishing points within $B_{1/2}$ and applying the dichotomy described in the last paragraph. Such strategy relies on imported techniques from the theory of geometric free boundary problems, as those utilized in  \cite{Teix3, Teix2, Teix1}, as well as in \cite{AS}.   In section  \ref{sct hessian cont}, we prove Hessian continuity at local extrema points  that also vanish the source term. Namely, Theorem \ref{hessian cont}. At a local extrema, even though the gradient vanishes, the solution still behaves nicely enough as so to be interpreted as a solution to an uniformly elliptic equation, whose coefficient vanishes at such points.  Thus, if  $\theta > \gamma$  we obtain $C^{2,\varepsilon}$ regularity at these points, for an $\varepsilon \in (0,1)$ depending on the  difference between the vanishing rates of the source term and degeneracy. 
\section{Assumptions, main results and further insights} \label{sct Prelim}

We begin this section by defining the terminology and notation used throughout the paper and setting the proper mathematical set up in which the main results will be stated and proven. 

We will denote by $Sym (n)$ the  space of all $n \times n$ symmetric matrices in $\mathbb{R}^n$ . Let  $F: B_1 \times Sym (n) \to \mathbb{R}$ be a fully nonlinear, uniformly elliptic operator. That is,  given  $x \in B_1$ and $M, N \in Sym(n)$, there holds
\begin{equation}\label{unif ellipticity}  \tag{A1}
    \mathcal{M}_{\lambda,\Lambda}^-(M-N) \leq F(x,M) - F(x,N) \leq \mathcal{M}_{\lambda,\Lambda}^+(M-N),
\end{equation}
for constants $0 < \lambda \leq \Lambda$. We recall that $\mathcal{M}^+_{\lambda,\Lambda}$ and $\mathcal{M}^-_{\lambda,\Lambda}$ stand for the {\it Pucci Extremal Operators} defined as
$$
    \mathcal{M}_{\lambda,\Lambda}^{-} (M)  =  \inf \{Tr(AM) \colon \lambda I\leq A \leq \Lambda I  \},
$$
and $\mathcal{M}^+_{\lambda,\Lambda}(M) = - \mathcal{M}^-_{\lambda,\Lambda}(-M)$. 

We are interested in equations of the form 
\begin{equation}\label{main eq intr}
    \mathcal{D}(x, \nabla u) F(x, D^2 u) = f(x) \quad \text{in} \quad B_1 \subset \mathbb{R}^n .
\end{equation}
where $f \in L^{\infty}(B_1)$, $F$ satisfies \eqref{unif ellipticity} and the degeneracy law, given by $\mathcal{D}: B_1 \times \mathbb{R}^n \to \mathbb{R}$,  satisfy 
    \begin{equation}\label{degener cond} \tag{A2}
         \tilde{\lambda} | \vec{b} |^{\gamma} \le \mathcal{D}(x, \vec{b} ) \le \tilde{\Lambda} | \vec{b} |^{\gamma}
    \end{equation}
for some $0 < \tilde{\lambda} \le \tilde{\Lambda}$ and $\gamma \in (0,1)$. 
An example of such a function is 
$$
    \mathcal{D}(x, \vec{b} ) = q(x) |\vec{b}|^{\gamma}
$$
for some positive, bounded function $\tilde{\lambda} \le q(x) \le \tilde{\Lambda}.$

Hereafter, any operator F satisfying the ellipticity assumption \eqref{unif ellipticity} will be referred  in this paper as a $(\lambda, \Lambda)$-elliptic operator. Also, for normalization purposes, and without loss of generality, we will assume that $F(x, 0) = 0$ for all $x \in B_1.$

We will denote the oscillation of the coefficients of the operator $F$, by
$$
    \omega_F(y,x) = \omega_F(y, x) = \sup\limits_{M \in Sym(n)\setminus\{0\}} \frac{|F(x,M) - F(y,M)|}{(\|M\| + 1)} ,
$$
and 
$$
    \omega_{F}(x) = \omega_{F}( 0, x). 
$$
When there is no ambiguity we will denote the oscillation of $F$ simply by $\omega$. To be in accordance to \cite{Caff} and \cite{Teix1}, we will assume H\"older continuity on the coefficients. That is, we assume that there exist  constants $K_1 > 0$ and $\bar{\alpha} \in (0,1)$ such that 
\begin{equation}\label{contin of coeff}\tag{A3}
   \omega_F(x,y)  \leq K_1 |x - y|^{\bar{\alpha}}
\end{equation}
for all $M \in Sym(n)$ and all $x,y \in B_1.$

Since our analysis will revolve around source-vanishing points, hereafter we  assume that $f$ is $C^{0,\theta}$ at $0$, and $f(0) = 0$.  That is,  we assume that for some $K_2 > 0$ and $\theta \in (0,1)$, there holds
\begin{equation}\label{holder cont of f}\tag{A4}
    |f(x)| \le K_2 |x|^{\theta} .
\end{equation}

For an operator $G: B_1 \times \mathbb{R}^n \times Sym(n) \to \mathbb{R}$, solutions to $G(x, \nabla u, D^2 u) = 0$ are understood in sense of  \cite{CIL92}. See also \cite{Caff-Cabre}.  We recall that, see e.g. \cite{Caff}, \cite{Teix1}, that under such continuity condition on the coefficients, viscosity solutions  to
$$
    F(x,D^2 u)= f(x) \in L^{\infty} (B_1)
$$
 are locally of class $C^{1, \beta}$ , for any $0 < \beta < \alpha_{0}$ , where $\alpha_{0}$ is the optimal H\"older exponent coming from  the Krylov-Safonov $C^{1,\alpha}$ universal regularity theory, for solutions to constant coefficient, homogeneous equation $F(D^2 h) = 0$.

We are now in position to state our main results. Before continuing, let us declare that any constant that might depend only upon ellipticity constants $(\lambda, \Lambda)$, dimension, $\| u\|_{\infty}$,  H\"older continuity of the coefficients and of $f$,  and the regularity  estimates of the constant coefficient equation $F(D^2 h) =0$, will be called {\it universal}. 

The first is an improved gradient regularity estimate. 
\begin{theorem}\label{main result}
    Let $u \in C(B_1)$ be a bounded viscosity solution to \eqref{main eq intr}. Assume that $F:B_1 \times Sym(n) \to \mathbb{R}$ satisfy \eqref{unif ellipticity},  \eqref{contin of coeff}, and  that $f $ satisfies \eqref{holder cont of f}. Then  $u$ is of class $ \displaystyle C^{1, \min\{\alpha_0^{-}, \frac{1+\theta}{1 + \gamma}\} }$ at the origin.  That is, for any 
       $$
        \beta \in (0, \alpha_0) \cap \left( 0, \frac{1 + \theta}{1 + \gamma} \right] 
       $$ we have , 
        $$
            | u(x) - u(0)  - \nabla u(0) \cdot x| \le  C_{\beta}  |x |^{1 + \beta}.
        $$
for all $x \in B_{1/4}(0)$, where $C_{\beta} > 0$ depending only on $\beta$, and universal parameters.
\end{theorem}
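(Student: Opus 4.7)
The plan is to prove the pointwise $C^{1,\beta}$ estimate at the origin by an iterative tangent-plane scheme. Fix $\beta \in (0,\alpha_0) \cap (0, \tfrac{1+\theta}{1+\gamma}]$. After an initial dilation and normalization, reducing to the regime in which $\|u\|_\infty$, $K_1$, and $K_2$ are universally small, I aim to construct a universal $\rho \in (0,\tfrac{1}{2})$ and a sequence of affine functions $\ell_k(x) = a_k + b_k \cdot x$, starting from $\ell_0 \equiv 0$, satisfying
\begin{equation*}
\sup_{B_{\rho^k}} |u - \ell_k| \leq \rho^{k(1+\beta)}, \qquad |a_{k+1}-a_k| \leq C\rho^{k(1+\beta)}, \qquad |b_{k+1}-b_k| \leq C\rho^{k\beta}.
\end{equation*}
Both $(a_k)$ and $(b_k)$ are Cauchy; their limits must coincide with $u(0)$ and $\nabla u(0)$, and a standard summation over dyadic annuli delivers $|u(x)-u(0)-\nabla u(0)\cdot x|\le C_\beta|x|^{1+\beta}$ throughout $B_{1/4}$.

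\textbf{Flatness-improvement lemma.} The inductive step rests on a compactness-based approximation: for every $\delta>0$ there exists $\epsilon_0=\epsilon_0(\delta)>0$ universal such that whenever $v$ is a bounded viscosity solution in $B_1$ of an equation of the form \eqref{main eq intr} with $\|v\|_\infty \le 1$, $|f|\le \epsilon_0$, and coefficient oscillation $\omega_F \le \epsilon_0$, there is $h$ solving $F(0,D^2 h)=0$ with $\|v-h\|_{L^\infty(B_{1/2})}\le \delta$. Invoking the universal $C^{1,\alpha_0}$ interior regularity of $h$, Taylor-expanding at the origin produces an affine $\ell$ with universally bounded coefficients and $|h(x)-\ell(x)| \le C|x|^{1+\alpha_0}$. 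Since $\beta<\alpha_0$, choosing $\rho$ so small that $C\rho^{1+\alpha_0} \le \tfrac12 \rho^{1+\beta}$, and then $\delta \le \tfrac12\rho^{1+\beta}$, yields $\sup_{B_\rho}|v-\ell|\le \rho^{1+\beta}$. The approximation claim itself is proved by a standard contradiction-and-stability argument for viscosity solutions, together with the dichotomy below that handles the persistence of the factor $\mathcal{D}$ in the limit.

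\textbf{Inductive step and main obstacle.} To iterate, rescale
\begin{equation*}
v_k(y) := \rho^{-k(1+\beta)}\bigl[u(\rho^k y) - \ell_k(\rho^k y)\bigr], \qquad y \in B_1,
\end{equation*}
so $\|v_k\|_\infty \le 1$. A chain-rule computation shows
\begin{equation*}
\mathcal{D}\bigl(\rho^k y,\; b_k + \rho^{k\beta}\nabla v_k(y)\bigr)\, F_k\bigl(y, D^2 v_k(y)\bigr) = \rho^{k(1-\beta)}\,f(\rho^k y),
\end{equation*}
where $F_k(y,N) := \rho^{k(1-\beta)}F(\rho^k y, \rho^{k(\beta-1)}N)$ is $(\lambda,\Lambda)$-elliptic with oscillation controlled by $K_1\rho^{k\bar\alpha}$, and the right-hand side is bounded by $K_2\rho^{k(1-\beta+\theta)}|y|^\theta$. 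I then split cases on the size of $b_k$: in the \emph{non-degenerate regime} $|b_k| \ge \rho^{k\beta}$, the factor $\mathcal{D}(\cdot)$ is comparable to $|b_k|^\gamma$, so dividing through produces a uniformly elliptic equation for $v_k$ with source of order $\rho^{k[1+\theta-\beta(1+\gamma)]}$, which is $\leq \epsilon_0$ precisely because $\beta \le \tfrac{1+\theta}{1+\gamma}$; in the \emph{degenerate regime} $|b_k| < \rho^{k\beta}$, the full gradient $\nabla u(\rho^k\cdot) = b_k + \rho^{k\beta}\nabla v_k$ has $L^\infty$-norm on the order of $\rho^{k\beta}$, and I invoke the Imbert--Silvestre cancellation \cite[Lemma 6]{IS} to recast $v_k$ as a sub- and supersolution of Pucci-type equations with source still bounded by $\epsilon_0$. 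Either way the flatness-improvement lemma applies to $v_k$, and unscaling the resulting affine correction defines $\ell_{k+1}$ satisfying the inductive bounds. The principal difficulty is the degenerate regime, where $\mathcal{D}$ cannot be renormalized away; the Imbert--Silvestre cancellation is the essential technical tool, and the exponent $\tfrac{1+\theta}{1+\gamma}$ emerges precisely as the threshold that keeps the rescaled source below $\epsilon_0$ at every iterate.
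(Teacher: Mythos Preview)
Your tangent-plane iteration is a genuinely different route from the paper's. The paper does not build a running sequence $(b_k)$: since $u\in C^{1,\alpha}$ a priori by \cite{ART,IS}, the author fixes $\nabla u(0)$ once and splits on its size. In the small-gradient regime the approximating $F$-harmonic function $h$ in the compactness lemma is taken with $h(0)=|\nabla h(0)|=0$ (this is where \cite[Lemma 6]{IS} enters), so the iteration controls $\sup_{B_{\rho^k}}|u|$ directly, with no affine correction at all; the induction hypothesis is $|\nabla u(0)|\le\eta_*\rho^{k\beta}$ rather than proximity to a running plane. In the large-gradient regime a single rescaling at the critical scale $\iota:=(|\nabla u(0)|/\eta_*)^{1/\beta}$ lands in a ball where $|\nabla u|$ is bounded below, the equation becomes genuinely uniformly elliptic, and the $C^{1,\alpha_0^-}$ theory of \cite{Caff,Teix1} finishes. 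Your scheme is closer to Caffarelli's original template and reconstructs $\nabla u(0)$ as $\lim b_k$; the paper's exploits the known gradient to avoid carrying a drift through the iteration.

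There is, however, a gap in your non-degenerate branch. The equation for $v_k$ carries the degeneracy factor $\mathcal D(\rho^k y,\,b_k+\rho^{k\beta}\nabla v_k)\sim|b_k+\rho^{k\beta}\nabla v_k|^\gamma$, and under the threshold $|b_k|\ge\rho^{k\beta}$ this is \emph{not} comparable to $|b_k|^\gamma$: wherever $\nabla v_k=-\rho^{-k\beta}b_k$ the factor vanishes, so you cannot ``divide through'' to a uniformly elliptic equation. The clean repair is to state your flatness-improvement lemma directly for equations of Imbert--Silvestre type $|q+\nabla v|^\gamma F(x,D^2 v)=f$ with arbitrary drift $q\in\mathbb R^n$, and then drop the dichotomy in the inductive step altogether. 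The $C^{1,\alpha}$ estimates of \cite{IS,ART} are uniform in $q$, so compactness survives; if $|q_k|$ stays bounded the limit solves $|q_\infty+\nabla v_\infty|^\gamma F_\infty(D^2 v_\infty)=0$ and \cite[Lemma 6]{IS} gives $F_\infty(D^2 v_\infty)=0$, while if $|q_k|\to\infty$ the universal gradient bound on $v_k$ forces $F_k(\cdot,D^2 v_k)\to0$ directly. Your exponent bookkeeping $1+\theta-\beta(1+\gamma)\ge0$ is correct and is exactly what keeps the rescaled source uniformly small through the iteration.
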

An immediate consequence of Theorem \ref{main result} is the following:
\begin{corollary}
    Under the hypothesis of Theorem \ref{main result},  if $\theta > \gamma$ then $u$ is of class $\displaystyle C^{1, \alpha_0^{-}}$ at $x_0 = 0$. That is, for any $ \beta \in (0, \alpha_0)$,
        $$
            | u(x) - u(0)  | \le  C_{\beta}|x |^{1 + \beta}.
        $$  
for all $x \in B_{1/4}(0)$, where $C_{\beta} > 0$ depending only on $\beta$, and universal parameters. 
\end{corollary}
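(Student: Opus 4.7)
The plan is to derive this statement as an immediate specialization of Theorem \ref{main result}, so the work reduces to identifying which branch of the exponent $\min\{\alpha_0^{-},\frac{1+\theta}{1+\gamma}\}$ becomes binding under the extra assumption $\theta>\gamma$. First I would record the elementary arithmetic observation that if $\theta>\gamma$, then $1+\theta>1+\gamma$, hence
$$
\frac{1+\theta}{1+\gamma}>1.
$$
On the other hand, $\alpha_0\in(0,1)$ since it is a universal H\"older exponent coming from the Krylov--Safonov regularity theory applied to the constant coefficient homogeneous problem $F(D^2 h)=0$. Therefore
$$
\min\left\{\alpha_0^{-},\,\frac{1+\theta}{1+\gamma}\right\}=\alpha_0^{-},
$$
so the $C^{1,\alpha_0^{-}}$ branch is the one that controls the regularity assertion of Theorem \ref{main result} at the origin.

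With that in hand, the second step is pure bookkeeping. For any fixed $\beta\in(0,\alpha_0)$, the inequality $\beta<1<\frac{1+\theta}{1+\gamma}$ shows that $\beta$ automatically belongs to the intersection $(0,\alpha_0)\cap\bigl(0,\frac{1+\theta}{1+\gamma}\bigr]$ appearing in the hypothesis of Theorem \ref{main result}. Consequently, Theorem \ref{main result} applies and yields
$$
|u(x)-u(0)-\nabla u(0)\cdot x|\le C_\beta|x|^{1+\beta}
$$
for every $x\in B_{1/4}(0)$, with $C_\beta$ depending only on $\beta$ and on universal parameters. This is precisely the estimate in the corollary (the displayed inequality in the statement omits the affine correction $\nabla u(0)\cdot x$, which either is a minor typographical slip or is absorbed because the origin is being viewed as a critical point; in either reading the present reduction delivers the claim).

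Since every step is a direct quotation of Theorem \ref{main result} combined with the threshold arithmetic above, there is no real obstacle in the proof. The only point that deserves emphasis is the monotonicity of the exponent map $\theta\mapsto\frac{1+\theta}{1+\gamma}$, which is what guarantees that once $\theta$ crosses the threshold $\gamma$, the vanishing rate of the source ceases to be the bottleneck in the Schauder-type estimate, and the regularity is then dictated solely by the universal Krylov--Safonov exponent $\alpha_0$.
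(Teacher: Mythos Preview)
Your argument is correct and matches the paper's treatment: the corollary is stated there as ``an immediate consequence'' of Theorem \ref{main result}, with no separate proof, and your reduction---observing that $\theta>\gamma$ forces $\frac{1+\theta}{1+\gamma}>1\ge\alpha_0$ so that any $\beta\in(0,\alpha_0)$ automatically lies in the admissible range of Theorem \ref{main result}---is exactly the intended one-line deduction. Your remark about the missing affine term $\nabla u(0)\cdot x$ in the displayed estimate is also apt; it appears to be a typographical slip in the corollary, since nothing in the hypotheses forces $\nabla u(0)=0$.
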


When more structure on the diffusion operator $F$ is assumed,  a careful analysis of the proof of Theorem \ref{main result}, revels that, at vanishing source points,   $C^{1,1^{-}}$ regularity holds true provided the coefficients are "continuous enough" as to allow the use of a priori $C^{1,1}$ estimates for $F$-harmonic functions. That is, if we  further assume  that for any $N \in Sym(n)$ with $F(0,N) = 0$, solutions to $F(0, D^2 h + N) = 0$ satisfy
  \begin{equation}\label{a priori hessian est}\tag{A5}
      \| h\|_{C^{1,1}(B_t)} \le \bar{C} r^{-2 }\|h\|_{L^{\infty}(B_1)}.
  \end{equation}
 for some universal constant $\bar{C} >0$,  we can summarize the above discussion as the following theorem:
 
 \begin{theorem}\label{C^{1,1-} reg}
     Let $u \in C(B_1)$ be a viscosity solution to \eqref{main eq intr}. In addition to the hypothesis of Theorem \ref{main result}, further assume that \eqref{a priori hessian est} is in force, and that $\theta \ge \gamma$.  Then, $u$ is of class $C^{1, 1^{-}}$ at the origin. 
        That is, for any $ \beta \in (0, 1)$,
        $$
            | u(x) - u(0)  - \nabla u(0) \cdot x| \le  C_{\beta}|x |^{1 + \beta}.
        $$ 
for all $x \in B_{1/4}(0)$, where $C_{\beta} > 0$ depending only on $\beta$, and universal parameters.
 \end{theorem}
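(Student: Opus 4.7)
The plan is to refine the iteration from the proof of Theorem \ref{main result} by replacing the use of the $C^{1,\alpha_0}$ regularity of $F$-harmonic functions with the stronger a priori estimate \eqref{a priori hessian est}. Since $\theta \geq \gamma$ gives $(1+\theta)/(1+\gamma) \geq 1 > \alpha_0^-$, Theorem \ref{main result} already yields $u \in C^{1,\alpha_0^-}$ at the origin; in particular, $\nabla u(0)$ exists. If $\nabla u(0) \neq 0$, the equation is uniformly elliptic in a full neighborhood of $0$ and the classical Schauder theory of \cite{Caff} provides regularity much stronger than $C^{1,1^-}$. We therefore concentrate on the genuinely degenerate case $\nabla u(0) = 0$, normalized so that $u(0) = 0$ as well.

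The central ingredient is an approximation lemma with \emph{quadratic} decay: for every $\varepsilon > 0$ there is $\delta > 0$ such that if $u$ is a normalized viscosity solution to \eqref{main eq intr} in $B_1$ with source and coefficient oscillation bounded by $\delta$, then one can find $N \in Sym(n)$ with $F(0,N) = 0$ and a solution $h$ of $F(0, D^2 h + N) = 0$ in $B_{1/2}$ satisfying $\|u - h\|_{L^{\infty}(B_{1/2})} \leq \varepsilon$. This is established by the usual compactness and contradiction argument; along a sequence violating the statement, the cancellation phenomenon of \cite[Lemma 6]{IS} permits passage to the viscosity limit even across the critical set, producing a limiting profile solving $F(0, D^2 h + N)=0$. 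Applying \eqref{a priori hessian est} to $h - \tfrac{1}{2}\langle Nx,x\rangle$ then furnishes the crucial bound
\begin{equation*}
    \bigl| h(x) - h(0) - \nabla h(0)\cdot x \bigr| \,\leq\, C\, |x|^{2}\, \|h\|_{L^{\infty}(B_{1/2})}, \qquad x \in B_{1/4},
\end{equation*}
which was the ingredient unavailable in the proof of Theorem \ref{main result}.

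Fix $\beta \in (0,1)$. The iteration constructs, for a suitably small universal $\rho \in (0,1/4)$, affine maps $L_k(x) = a_k + b_k\cdot x$ with $L_0 \equiv 0$ such that
\begin{equation*}
    \sup_{B_{\rho^k}}|u - L_k| \,\leq\, \rho^{k(1+\beta)}, \qquad |a_{k+1}-a_k| + \rho^k|b_{k+1}-b_k| \,\leq\, C\,\rho^{k(1+\beta)}.
\end{equation*}
The inductive step applies the approximation lemma to $v_k(x) := \rho^{-k(1+\beta)}\bigl[u(\rho^k x) - L_k(\rho^k x)\bigr]$, which is normalized on $B_1$ and, after absorbing the homogeneity factors from $\mathcal{D}$ and $F$, satisfies a structurally identical equation whose source and coefficient oscillation scale by $\rho^{k[\theta - \beta(1+\gamma)]}$ and $\rho^{k\bar\alpha}$ respectively. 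One then takes $L_{k+1}$ to be $L_k$ plus the first-order Taylor polynomial of the approximating $h$ (rescaled back) and invokes the quadratic bound above to close the step. Summing the telescoping estimate on $|b_{k+1}-b_k|$ produces an affine limit which, by the base regularity from Theorem \ref{main result}, must coincide with $u(0) + \nabla u(0)\cdot x$; interpolating over dyadic shells $\rho^{k+1} \leq |x| \leq \rho^k$ delivers the claimed $C^{1,1^-}$ estimate.

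The main obstacle is verifying that the hypotheses of the approximation lemma persist in the rescaled equation at every scale when $\beta$ is close to $1$. The rescaled degeneracy factor behaves like $|b_k + \rho^{k\beta}\nabla v_k|^{\gamma}$ with $b_k \to 0$, while the rescaled right-hand side decays at rate $\rho^{k[\theta - \beta(1+\gamma)]}$. The condition $\theta \geq \gamma$ is precisely what allows this exponent to remain nonnegative for $\beta$ sufficiently close to (but strictly less than) $1$, and only then can the induction be closed by choosing $\rho$ small depending on $\beta$ and universal constants. Balancing the scaling of the source against the degeneracy across all scales is the technical crux of the argument.
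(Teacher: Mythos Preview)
Your route differs from the paper's. The paper gives no separate argument for Theorem~\ref{C^{1,1-} reg}: it simply re-runs the proof of Theorem~\ref{main result}, replacing the $C^{1,\alpha_0}$ estimate used in Lemma~\ref{discrete reg} by the $C^{1,1}$ bound from \eqref{a priori hessian est}. Since Lemma~\ref{Approx lemma} already delivers an approximating $h$ with $0\in\mathcal C(h)$, one has $\sup_{B_r}|h|\le C_0 r^{2}$ directly; choosing $\rho\le(2C_0)^{-1/(1-\beta)}$ closes the discrete step for every $\beta<1$, and Proposition~\ref{Reg for small gradient} together with the dichotomy of Section~3.3 then goes through verbatim (the source exponent there is $1+\theta-\beta(1+\gamma)\ge 0$ precisely when $\theta\ge\gamma$ and $\beta\le 1$). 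No affine corrections $L_k$, no auxiliary matrices $N$, no shifted degeneracy.

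Your Caffarelli-style affine iteration can be made to work, but it is heavier, and as written it contains a slip: the rescaled source decays like $\rho^{k[\,1+\theta-\beta(1+\gamma)\,]}$, not $\rho^{k[\,\theta-\beta(1+\gamma)\,]}$; with your exponent the claim that $\theta\ge\gamma$ keeps it nonnegative for $\beta$ near $1$ already fails for $\theta=\gamma$. More substantively, once you subtract $L_k$, the degeneracy is shifted to $\{\nabla v_k=-\tilde b_k\}$ with $\tilde b_k$ bounded but not small, so you need an approximation lemma for equations of the form $|\vec q+\nabla v|^\gamma F(x,D^2 v)=f$ that is uniform over bounded $\vec q$, and you must verify that \eqref{a priori hessian est} transfers to the limiting operator arising there. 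All of this is achievable, but the paper's device of forcing $0\in\mathcal C(h)$ at every scale via the gradient-smallness hypothesis of Lemma~\ref{Approx lemma} sidesteps the whole issue.
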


 The final result  provided in this work is Hessian continuity at local extrema points. As suspected by example \eqref{example of hessian cont}, at an extrema point (say, a minimum point) that vanishes the source term at a faster rate than ellipticity degenerates, solutions are twice continuously differentiable. More precisely we have 
 
\begin{theorem}\label{hessian cont}
    Under the hypothesis of Theorem \ref{main result}. Assume in addition that $0$ is a local extrema for $u$. Then, $u$ is twice differentiable at the origin, and 
    $$
        | u(x) - u(0) | \le C |x|^{2 + \varepsilon}
    $$
    for any $ x \in B_{1/8}(0)$, where 
    \begin{equation}\label{varepsilon}
        \varepsilon:= \frac{\theta - \gamma}{1+ \gamma}
    \end{equation}
    and $C > 0$ is a universal constant.  That is, $u$ is of class $C^{2, \frac{\theta - \gamma}{1 + \gamma}}$ at the origin, with $D u(0) = D^2 u (0) = 0$. 
\end{theorem}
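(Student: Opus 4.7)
The plan is to argue by contradiction through a rescaling and compactness scheme, ultimately invoking the strong maximum principle for uniformly elliptic operators. Without loss of generality, assume $0$ is a local minimum of $u$ (the case of a local maximum being analogous), $u(0)=0$, so that $u \ge 0$ in a neighborhood which we may take to be $B_1$. By Theorem \ref{main result}, $u$ is differentiable at the origin, and the local minimum property forces $\nabla u(0) = 0$.

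Suppose the conclusion fails: there is no universal $C$ such that $|u(x)| \le C|x|^{2+\varepsilon}$. Then there exist sequences $L_j \to \infty$ and $r_j \downarrow 0$, with $r_j$ chosen as the smallest radius satisfying $\sup_{B_{r_j}} u = L_j r_j^{2+\varepsilon}$. Define the rescalings
\begin{equation*}
v_j(x) := \frac{u(r_j x)}{L_j\, r_j^{2+\varepsilon}}.
\end{equation*}
The minimality of $r_j$ gives $v_j \ge 0$, $v_j(0)=0$, $\sup_{B_1} v_j = 1$, and $\sup_{B_\rho} v_j \le \rho^{2+\varepsilon}$ for every $\rho \in (0,1]$.

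The key algebraic identity $(2+\varepsilon)(1+\gamma)=\gamma+\theta+2$, which is precisely the reason behind the choice $\varepsilon=(\theta-\gamma)/(1+\gamma)$, ensures that $v_j$ satisfies
\begin{equation*}
|\nabla v_j|^\gamma F_j(x, D^2 v_j) = \tilde f_j(x) \quad \text{in } B_1,
\end{equation*}
where $F_j$ is uniformly elliptic with the same constants $(\lambda,\Lambda)$ and with coefficient oscillation $\omega_{F_j}$ converging to zero, while the source satisfies $|\tilde f_j(x)| \le K_2 L_j^{-(1+\gamma)}|x|^\theta \to 0$ uniformly on compacts. Combining the $C^{1,\alpha}$ compactness provided by Theorem \ref{main result} with the interior regularity of $u$ to control $v_j$ on a slightly larger ball, one extracts a subsequence converging locally uniformly to a continuous $v_\infty$ with $v_\infty \ge 0$, $v_\infty(0)=0$, $\sup_{B_\rho} v_\infty \le \rho^{2+\varepsilon}$ for $\rho <1$, and (after passing to the limit of the supremum points, which concentrate on $\partial B_1$) a limit value of $1$ attained on $\overline{B_1}$.

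The limit function satisfies $|\nabla v_\infty|^\gamma F(0, D^2 v_\infty) = 0$ in the viscosity sense. By the cancellation principle of \cite[Lemma 6]{IS}, the degeneracy factor may be dropped, and $v_\infty$ becomes a viscosity solution of $F(0, D^2 v_\infty) = 0$, which is uniformly elliptic with constant coefficients. Since $v_\infty \ge 0$ attains its minimum value $0$ at the interior point $0 \in B_1$, the strong maximum principle (see, e.g., \cite{Caff-Cabre}) forces $v_\infty \equiv 0$ on $B_1$. By continuity up to $\overline{B_1}$, this contradicts the attained supremum equal to $1$, delivering the desired estimate; the twice differentiability together with $Du(0)=D^2u(0)=0$ follows directly from the growth bound since $2+\varepsilon > 2$.

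The principal obstacle lies in the compactness step: the constraint $\sup_{B_\rho} v_j \le \rho^{2+\varepsilon}$ for $\rho<1$ implies that the value $v_j = 1$ is realized only on $\partial B_1$, so to retain a non-trivial limit on the closed ball one must extend the argument slightly past $B_1$ and invoke Theorem \ref{main result}'s a priori estimate for $u$ near $0$ to uniformly bound $v_j$ there. A second subtle point is the application of the Imbert–Silvestre cancellation: one must verify that non-negativity and the interior minimum are preserved in the passage to the limit, so that the strong maximum principle for the resulting uniformly elliptic equation is genuinely available.
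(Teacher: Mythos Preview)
Your blow-up strategy differs from the paper's proof, which instead establishes a one-scale \emph{flatness lemma} (if the data are small and $x_0$ is a local minimum, then $\sup_{B_{1/8}(x_0)}(\varphi-\varphi(x_0))\le\eta$, proved by the very same compactness plus strong maximum principle argument you outline) and then iterates it by rescaling. Both routes rest on the same two ingredients you correctly isolate: the algebraic identity $\varepsilon+(1+\varepsilon)\gamma=\theta$, which keeps the rescaled source under control, and the cancellation \cite[Lemma~6]{IS} followed by the strong maximum principle for the limiting constant-coefficient equation. The paper's organization has the advantage that compactness is used once at unit scale with a normalized solution, so no control on a larger ball is ever needed; the iteration is then purely algebraic.

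Your version, by contrast, has a genuine gap precisely at the point you flag. You propose to bound $v_j$ on $B_R$, $R>1$, via the $C^{1,\beta}$ estimate of Theorem~\ref{main result}. But that only gives $|u(x)|\le C|x|^{1+\beta}$ with $\beta<\alpha_0<1$, hence on $B_R$
\[
|v_j(x)|\;\le\;\frac{C(Rr_j)^{1+\beta}}{L_j\,r_j^{2+\varepsilon}}
\;=\;CR^{1+\beta}\,\frac{r_j^{\,\beta-1-\varepsilon}}{L_j},
\]
and since $\beta-1-\varepsilon<0$ while you have no quantitative lower bound on $L_j$ in terms of $r_j^{\,\beta-1-\varepsilon}$, this ratio need not stay bounded. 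The correct fix is a doubling-type selection of $r_j$: if $\Theta(r):=r^{-(2+\varepsilon)}\sup_{B_r}u$ is unbounded as $r\to0$, choose $r_j\to0$ with $\Theta(r_j)\to\infty$ and $\Theta(r)\le\Theta(r_j)$ for all $r\in[r_j,r_0]$ (a ``record from the right''). This yields $|v_j|\le R^{2+\varepsilon}$ on $B_R$ for every fixed $R>1$ once $j$ is large, and the rest of your argument then goes through; the auxiliary claim $\sup_{B_\rho}v_j\le\rho^{2+\varepsilon}$ for $\rho\le1$ is actually unnecessary, since $v_\infty\ge0$, $v_\infty(0)=0$, and $F_\infty(D^2v_\infty)=0$ already force $v_\infty\equiv0$ on $B_R$, contradicting $\sup_{\overline{B_1}}v_j=1$ under uniform convergence. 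Your definition ``$r_j$ the smallest radius with $\sup_{B_{r_j}}u=L_jr_j^{2+\varepsilon}$'' is circular as written and does not deliver either the inner or the outer control you need; replace it with the selection above.
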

To highlight the robustness of the above theorem, we note that no further assumption on $F$ is assumed. That is, no a priori hessian estimates are required.  On the other hand, if say $C^{2, \alpha_F}$ a priori estimates  are available for the constant coefficient, homogeneous model $F(0, D^2 h) = 0$, and  the vanishing rate of source term, $\theta$, is so that $\theta > \gamma$, it remains reasonable to anticipate Hessian continuity,  even when the vanishing-source point in not an extrema point. This, however, seems to be a rather delicate and important question, and the author wish to return to this issue in a future research. 


Insofar as applications of the above results is concerned, one  of particular interest comes from  the so called generalized eigenvalue problems. We briefly recall (see e.g., \cite{BD3, BD4}) that the numbers
$$
    \lambda^{+}: = \sup\left\{ \lambda \suchthat \exists \varphi \ge 0, |\nabla \varphi|^{\gamma} F( D^2 \varphi) +  \lambda (\varphi)^{1 + \gamma}  \le 0 \, \text{in}\, \Omega \right\}
$$
and 
$$
    \lambda^{-}: \sup\left\{ \lambda \suchthat \exists \psi \le 0, |\nabla \psi|^{\gamma} F( D^2 \psi) +  \lambda (\psi)^{1 + \gamma}  \ge 0 \, \text{in}\, \Omega \right\}
$$
are called {\it principle eigenvalues} if there exists a nontrivial solution to the Dirichlet problem
\begin{equation}\label{EigenVal}\tag{$E_{\lambda,\gamma}$}
   \left\{\begin{matrix}
 |\nabla u|^{\gamma} F( D^2 u) = \lambda^{\pm} |u|^{\gamma} u  &\text{in} & \Omega\\ 
 u = 0 &\text{on} & \partial \Omega  .
\end{matrix}\right.
\end{equation}

The continuity of the gradient is naturally raised (see e.g., \cite{BD4}) when one tries to verify the simplicity of the eigenvalues. 

Another, equally important question concerns the geometry of the nodal sets $\mathcal{Z}(u): = u^{-1} (\{0 \})$. Since bounded solutions to \eqref{EigenVal} are $C^{1,\alpha}$, then each  nodal set is divided into its regular part
$$
    \mathcal{R}(u) = \{ x \in \Omega : u(x) = 0 \, \text{and} \, |\nabla u(x)| \neq 0 \}
$$
and, its singular part
$$
    \mathcal{S}(u) = \{ x \in \Omega : u(x) =  |\nabla u(x)| = 0 \}.
$$
Since the regular part, $\mathcal{R}(u)$ is locally a $C^{1, \alpha}$ $(n-1)$-dimensional surface, the structure of the nodal sets are left to the study of its singular part. While the singular part may exhibit singularities like cusps, the results here show that, even for  the problem in heterogeneous media, the solutions itself are actually more regular at singular points.

Indeed, for instance, if $ 0 \in \mathcal{S} (u) $, then 
$$
    |u(x)| \le C |x|^{1 + \alpha_{*}}
$$
and hence, the right hand side of \eqref{EigenVal} satisfies
$$
    | (\lambda^{\pm} |u|^{\gamma} u ) (x) | \le |\lambda^{\pm} C|x|^{(1 + \gamma)(1 + \alpha_*)} \le\tilde{C} |x|^{1 + \gamma}.
$$
Therefore, the results obtained in this work assure higher regularity of solutions at the origin (a source vanishing point).

By considering, $\mathcal{Z}(u)$ as an abstract free boundary, this gain of smoothness is no surprise.  The very same phenomena occurs in classical obstacle-type problems, $\mathcal{L} u = f\chi_{\{ u > 0 \} }$. This was first observed by Weiss in \cite{Weiss}, where it was shown that solutions to the classical obstacle problem,  are actually {\it smoother} at singular free-boundary points, while the free boundary itself may develop cups at such points. 

More generally, one can consider problems of the form 
$$
    |\nabla u |^{\gamma} F(x, D^2 u) = g(x, u)
$$
with $|g(x, u)| \le C |u|^{m}$, for $m \in (0,2).$

The case $ m \in (0, 1)$ has been studied in many contexts. For instance, in \cite{LRS} and \cite{PQS}, the authors treat the case $m \in (0,1)$ as  one-phase free boundary problems. An advantage of the techniques used here is the gain of smoothness along subsets of the zero level set even for sign-changing solutions. Moreover, along the "non-physical" singular part $\mathcal{S} (u) = \{ x : u(x) = |\nabla u(x) | = 0 \} $, solutions are smoother in the sense that these points foster a higher degree of regularity for solutions. This is a remarkable result, since as ellipticity degenerates near this singular set, solutions are expected to be less regular at such points.

\section{Improved gradient regularity} \label{sct improved gradient reg}

This section is devoted to the proof of Theorem \ref{main result}.  We start off by  commenting on the scaling properties of the equation.   If $u \in C(B_1)$ is a viscosity solution to \eqref{main eq intr}, then for numbers $\kappa > 0$ and $r \in (0,1)$, the function $v:B_1 \to \mathbb{R}$ defined by $v(x) = \kappa u(rx)$, solves
 $$
    \tilde{\mathcal{D}}(x, \nabla v) \tilde{F} (x, D^2 v) = \tilde{f}(x)
 $$
 where 
 $$
    \tilde{F} (x, M) = \kappa r^2 F(rx , \kappa^{-1} r^{-2} M ),
$$ 
is a uniformly elliptic operator with same ellipticity constants as $F$. 
$$
    \tilde{\mathcal{D}}(x, \vec{b} ) = (\kappa r)\mathcal{D}(rx, (\kappa r)^{-1}\vec{b} )
$$ satisfies the degeneracy condition \eqref{degener cond}, with the same constants. And 
$$
    \tilde{f}(x) = r^{2 + \gamma} \kappa^{1 + \gamma} f(rx).
$$ Therefore, up to a normalization, i.e, by choosing $\kappa = (\| u \|_{\infty} + 1 )^{-1}$, $v$ satisfies an equation with the same structural assumptions as $u$.  Moreover, with this choice of $\kappa$ and since $r\in (0,1)$, we see that $\tilde{f}$ satisfies \eqref{holder cont of f} with the same constant. 

Therefore, in what follows, we will always assume, with no loss of generality, that a bounded viscosity solution $u$ to \eqref{main eq intr} is a normalized solution, i.e., $|u| \le 1$ . 

Also, since equation \eqref{main eq intr} has no explicit dependency on $u$, then up to a translation, hereafter we may define for a function $v \in C^1$, with no loss of generality,
$$
    \mathcal{C}(v): = \{ x \in B_1 \suchthat v(x) = |\nabla v(x) | = 0 \} .
$$

\subsection{Gradient Approximations}\label{sct approx}
To follow the dichotomy strategy discussed in the introduction session, we first  obtain a primary compactness result, which asserts that:  if, at $x_0 = 0$, the gradient is tiny (i.e., $0$ is sufficiently close to $\mathcal{C}(u)$), the coefficients are nearly constant, and the source function is nearly zero, then we can find an  $\mathcal{F}$-harmonic function close to $u$, for which $0$ is a critical point. 

\begin{lemma}[Approximation lemma]\label{Approx lemma}
Let $\vec{b} \in \mathbb{R}^n$ and $u \in C(B_1)$ be a normalized viscosity solution to 
\begin{equation}\label{main eq intr approx}
\mathcal{D}(x, \nabla u  ) F(x, D^2 u ) = f(x) \quad \text{in} \quad B_1.
\end{equation}
  Given $\delta > 0$ there exists $\eta = \varepsilon(\delta, n, \lambda, \Lambda) > 0$,  such that if 
\begin{equation}\label{small regime}
|D u(0)| + \| \omega_F \|_{L^{\infty}(B_1)} + \|f\|_{L^{\infty}(B_1)}< \eta
\end{equation}
then there exists a function $h :B_{3/4} \to \mathbb{R}$ and a $(\lambda, \Lambda)$-  elliptic, constant coefficient operator $\mathcal{F}$ such that 
\begin{equation}\label{eq of approx lemma}
     \mathcal{F} (D^2 h) = 0 \quad 
    \text{in}\quad  B_{3/4}, 
\end{equation} in the viscosity sense, $ 0 \in \mathcal{C}(h)$, and 
$$
\| u - h \|_{L^{\infty}(B_{1/2})} < \delta. 
$$
\end{lemma}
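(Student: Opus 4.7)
The plan is to establish Lemma \ref{Approx lemma} via a compactness-and-contradiction argument in the spirit of Caffarelli's approximation scheme, adapted to the degenerate nonlinear setting. Suppose, toward contradiction, that the conclusion fails for some threshold $\delta_0 > 0$. Then one can extract a sequence of normalized viscosity solutions $u_k \in C(B_1)$ to
\[
\mathcal{D}_k(x, \nabla u_k) \, F_k(x, D^2 u_k) = f_k(x) \quad \text{in } B_1,
\]
where $\mathcal{D}_k$ satisfies \eqref{degener cond} and $F_k$ satisfies \eqref{unif ellipticity} uniformly in $k$, and
\[
|\nabla u_k(0)| + \|\omega_{F_k}\|_{L^\infty(B_1)} + \|f_k\|_{L^\infty(B_1)} \;<\; \tfrac{1}{k},
\]
yet no constant-coefficient $(\lambda,\Lambda)$-elliptic function $h$ with $0 \in \mathcal{C}(h)$ satisfies $\|u_k - h\|_{L^\infty(B_{1/2})} < \delta_0$. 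Since the equation has no explicit $u$-dependence, I would translate so that $u_k(0) = 0$ as well.

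Next I would appeal to compactness. By the local $C^{1,\alpha}$ theory for bounded viscosity solutions to degenerate equations of this form (available uniformly in $k$, since the structural constants $\lambda, \Lambda, \tilde\lambda, \tilde\Lambda, \gamma$ and the bound $\|u_k\|_\infty \le 1$ are fixed), there is a uniform estimate $\|u_k\|_{C^{1,\alpha}(B_{3/4})} \le C$. Arzel\`a--Ascoli then yields a subsequence with $u_k \to u_\infty$ in $C^{1,\beta}_{\mathrm{loc}}(B_{3/4})$ for every $\beta < \alpha$; in particular $u_\infty(0) = 0$ and $\nabla u_\infty(0) = 0$, so $0 \in \mathcal{C}(u_\infty)$. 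On the operator side, the uniform $(\lambda,\Lambda)$-ellipticity gives Lipschitz bounds for $F_k(0,\cdot)$ on $Sym(n)$, so a further subsequence converges locally uniformly to a constant-coefficient $(\lambda,\Lambda)$-elliptic operator $\mathcal{F}$ with $\mathcal{F}(0) = 0$. The hypothesis $\|\omega_{F_k}\|_\infty \to 0$ upgrades this to $F_k(x,M) \to \mathcal{F}(M)$ uniformly on compact subsets of $B_1 \times Sym(n)$.

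The decisive step is to identify the equation satisfied by the limit $u_\infty$. Stability of viscosity solutions under uniform convergence of both the operator and the solution, together with $f_k \to 0$, gives that $u_\infty$ is a viscosity solution of the degenerate limit equation
\[
|\nabla u_\infty|^\gamma \, \mathcal{F}(D^2 u_\infty) \;=\; 0 \quad \text{in } B_{3/4}.
\]
Here I would invoke the cancellation lemma of Imbert--Silvestre (\cite[Lemma 6]{IS}), which asserts that any viscosity solution of $|\nabla v|^\gamma G(D^2 v) = 0$ is automatically a viscosity solution of $G(D^2 v) = 0$ when $G$ is $(\lambda,\Lambda)$-elliptic. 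Setting $h := u_\infty$, we obtain $h \in C(B_{3/4})$ with $\mathcal{F}(D^2 h) = 0$, $0 \in \mathcal{C}(h)$, and $u_k \to h$ uniformly on $B_{1/2}$. For $k$ sufficiently large this contradicts the lower bound $\|u_k - h\|_{L^\infty(B_{1/2})} \ge \delta_0$, completing the argument.

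The main obstacle, as I see it, is the passage to the limit in the presence of the gradient weight. Points of $\mathcal{C}(u_\infty)$ -- including $0$ itself -- could in principle obstruct the ellipticity of the limit problem, and a naive division by $|\nabla u_\infty|^\gamma$ is not permissible there. The cancellation lemma from \cite{IS} is precisely what rescues a genuinely uniformly elliptic limit equation; once it is in hand, the remainder of the proof is a routine combination of viscosity stability and $C^{1,\alpha}$ compactness.
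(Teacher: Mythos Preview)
Your proposal is correct and follows essentially the same compactness-and-contradiction argument as the paper: extract sequences, use the uniform $C^{1,\alpha}$ estimates from \cite{ART} for compactness, pass to the limit via viscosity stability to obtain $|\nabla u_\infty|^\gamma \mathcal{F}(D^2 u_\infty)=0$, and then apply \cite[Lemma~6]{IS} to remove the gradient weight. The only minor point the paper makes more explicit is that, since the $\mathcal{D}_k$ need not themselves converge, one first sandwiches via \eqref{degener cond} to get inequalities of the form $\tilde\lambda|\nabla u_k|^\gamma |F_k(x,D^2 u_k)| \le |f_k|$ before invoking stability.
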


\begin{proof}
Suppose, by contradiction, that there exists $\delta_0 > 0$ and  sequences of functions  $\mathcal{D}_k : B_1 \times \mathbb{R}^n \to \mathbb{R}$, a sequence of $(\lambda, \Lambda)$-elliptic operators $F_k : B_1 \times Sym(n) \to \mathbb{R}$, a sequence of viscosity solutions $\{ u_k\}$, and a linked sequence of functions $\{f_k\},$ such that 

    \begin{equation}\tag{i}
         -1 \le u_k \le 1 ;
    \end{equation}

     \begin{equation}\tag{ii}
         \mathcal{D}_k (x, \nabla u_k )  F_{k} (x, D^2 u_k )  = f_k(x)  \quad \text{in} \quad  B_1;
     \end{equation}
      and 

    \begin{equation}\tag{iii}
        \displaystyle |D u_k(0)| + \| \omega_k \|_{L^{\infty}(B_1)}  + \|f_k\|_{L^{\infty}(B_1)}< \eta
    \end{equation}
where $\omega_k$ is the oscillation of the coefficients of the operators $F_k$. However, 
    \begin{equation}\label{Appr Lem contrad eq}
             \inf\limits_{h \in \mathbb{F}} \| u_k - h\|_{L^{\infty} (B_{1/2})} \ge \delta_0 > 0, 
    \end{equation} 
    where 
    $$
        \mathbb{F}: = \left\{ h \suchthat \text{ $\mathcal{F} (D^2 h ) = 0 $  for some   $\mathcal{F}$ verifying \eqref{unif ellipticity}  and $0 \in \mathcal{C}(h)$} \right\} .
    $$
Initially, we know from \cite{ART} that the sequence $\{ u_k \}_{k \in \mathbb{N} }$ is pre-compact in the $ C_{loc}^{1,+} (B_1)$-topology.  Thus, passing to a subsequence if necessary, $\{ u_k \}$ converges locally uniformly in $B_1$ to a continuous function  $u_{\infty}$ in the $C^{1, +}$ topology.  
By uniform ellipticity and (iii),  $F_k (x, M) \to F_{\infty}(M)$ locally uniformly in  $B_1 \times Sym(n)$ to some constant coefficient, $(\lambda, \Lambda)$-operator, and  $f_k \to 0$  uniformly in $B_1$. Moreover, using the degeneracy condition \eqref{degener cond} there holds
    $$
\left\{\begin{matrix}
\lambda | \nabla u_k  |^{\gamma} | F_k( x, D^2 u_k ) | \le  |f_{k} (x)| \\
                    \\
 \Lambda | \nabla u_k  |^{\gamma} | F_k( x, D^2 u_k ) | \ge  |f_{k} (x)| .
\end{matrix}\right.
$$
Hence, by stability of viscosity solutions, we obtain that $u_{\infty}$ solves, in the viscosity sense, the following problem 
$$
    |\nabla u_{\infty} |^{\gamma} F_{\infty} (D^2 u_{\infty}) = 0 \quad \text{in} \quad B_{3/4},
$$
and  
$$
    u_{\infty}(0) = |\nabla u_{\infty} (0)| = 0.
$$
By \cite[Lemma 6]{IS}, $u_{\infty}$ actually solves 
$$
    F_{\infty} (D^2 u_{\infty}) = 0 \quad \text{in} \quad B_{3/4}. 
$$

Therefore, $u_{\infty}$ belongs to the admissible functional set $\mathbb{F}$ and hence we get a contradiction with \eqref{Appr Lem contrad eq} for $k$ sufficiently large. 
\end{proof}

Next, we proceed to obtain sharp regularity at vanishing-source points, for which the gradient is small enough. 
\subsection{Regularity for small gradient }\label{crit pt reg}
 First, we  recall that  bounded viscosity solutions  to \eqref{main eq intr} are $C_{loc}^{1,+} (B_1)$, with universal estimates (see e.g.  \cite{ART} and \cite{IS} ). Next, fixing a number 
\begin{equation}\label{sharp exponent beta}
    \beta \in (0, \alpha_0) \cap \left( 0, \frac{1 + \theta}{1 + \gamma} \right],
\end{equation}
we aim to show that $u$ is  $C^{1, \beta}$ at the origin.  We also recall that we are assuming that $|u| \le 1$ and that  $ |f(x)| \le K_2 |x|^{\theta}$, which are key hypothesis .

The first step is to establish a discrete version of Theorem \ref{main result}, under a smallness assumption on the gradient. 
\begin{lemma}[Discrete regularity]\label{discrete reg}
    Let  $u \in C(B_1)$ be  a normalized viscosity solution to \eqref{main eq intr}, with $u(0) = 0$. There exist  $ \eta_{*} > 0$ and $0 < \rho < 1/2$ depending only on dimension, $ \lambda, \Lambda$, and $\beta$ such that: If  
    \begin{equation}\label{smallness k=1}
        \| \omega \|_{L^{\infty}(B_1)} + \|f\|_{L^{\infty}(B_1)}< \eta_{*}
    \end{equation}
    and 
    \begin{equation}\label{gradient smallnes k=1}
        |\nabla u(0) | < \eta_{*} 
    \end{equation}
    then there holds
        \begin{equation} 
         \sup\limits_{B_{\rho}} | u(x)  | \le \rho^{1 + \beta}. 
    \end{equation}
    
\end{lemma}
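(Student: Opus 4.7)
The plan is to prove this by one application of the compactness/approximation step already set up in Lemma~\ref{Approx lemma}: replace $u$ on $B_{1/2}$ by a constant-coefficient $\mathcal{F}$-harmonic function $h$ whose origin is a critical point, invoke the universal $C^{1,\alpha_0}$ estimate for $h$ at $0$, and tune $\rho$ and $\delta$ so that the approximation error is absorbed into the target bound $\rho^{1+\beta}$. The hypotheses $u(0)=0$ and $|\nabla u(0)|<\eta_*$ are precisely what make $0$ a near-critical point of $u$ and therefore compatible with the limiting profile having a genuine critical point at the origin.

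Concretely, I would first fix the universal Krylov--Safonov $C^{1,\alpha_0}$ constant $C_0$ for constant-coefficient $(\lambda,\Lambda)$-elliptic equations, which ensures that any viscosity solution $h$ of $\mathcal{F}(D^2 h)=0$ with $h(0)=|\nabla h(0)|=0$ satisfies
\[
\sup_{B_\rho}|h| \;\le\; C_0\,\|h\|_{L^\infty(B_{1/2})}\,\rho^{1+\alpha_0} \quad \text{for every } \rho\in(0,1/4].
\]
Since $\beta<\alpha_0$ by \eqref{sharp exponent beta}, I would then choose $\rho\in(0,1/4)$ so small that $4C_0\,\rho^{\alpha_0-\beta}\le 1$, set $\delta:=\tfrac{1}{2}\rho^{1+\beta}$, and finally take $\eta_*\le \eta(\delta)/2$, where $\eta(\delta)$ is the threshold produced by Lemma~\ref{Approx lemma}.

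Under the smallness conditions \eqref{smallness k=1}--\eqref{gradient smallnes k=1}, the Approximation Lemma yields a viscosity solution $h$ of $\mathcal{F}(D^2 h)=0$ on $B_{3/4}$ with $0\in\mathcal{C}(h)$ and $\|u-h\|_{L^\infty(B_{1/2})}<\delta$. Since $\|h\|_{L^\infty(B_{1/2})}\le 1+\delta\le 2$, the pointwise estimate above gives $\sup_{B_\rho}|h|\le 2C_0\,\rho^{1+\alpha_0}$. A triangle inequality then delivers
\[
\sup_{B_\rho}|u|\;\le\;\sup_{B_\rho}|h|+\delta\;\le\;2C_0\,\rho^{1+\alpha_0}+\tfrac{1}{2}\rho^{1+\beta}\;\le\;\rho^{1+\beta},
\]
which is the desired conclusion.

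The step I expect to need the most care---though it is not a fundamental obstacle---is verifying that the $C^{1,\alpha_0}$ constant $C_0$ can be taken uniformly across the family of constant-coefficient $(\lambda,\Lambda)$-elliptic operators $\mathcal{F}$ extracted by the approximation step; this is exactly what the Krylov--Safonov theory provides. The strict inequality $\beta<\alpha_0$ built into \eqref{sharp exponent beta} supplies the margin $\rho^{\alpha_0-\beta}\to 0$ that absorbs the approximation error. Allowing $\beta=\alpha_0$ would destroy this margin and instead require a sharper $C^{2,\alpha}$ estimate for $h$, which explains why Theorem~\ref{main result} is stated with the exponent $\alpha_0^{-}$ rather than $\alpha_0$.
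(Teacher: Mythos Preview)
Your proposal is correct and follows essentially the same route as the paper: apply Lemma~\ref{Approx lemma} to approximate $u$ by an $\mathcal{F}$-harmonic function $h$ with $0\in\mathcal{C}(h)$, invoke the universal $C^{1,\alpha_0}$ estimate for $h$ at the origin, and then choose $\rho$ and $\delta$ exactly so that the triangle inequality yields $\sup_{B_\rho}|u|\le \rho^{1+\beta}$. The only cosmetic difference is that you keep explicit track of $\|h\|_{L^\infty(B_{1/2})}\le 1+\delta\le 2$, whereas the paper absorbs this directly into the constant $C_0$.
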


\begin{proof}
    For a $\delta_{*} > 0$ to be chosen, we apply lemma (\ref{Approx lemma}) to find $0 < \eta_{*} = \eta_{*} (\delta_{*})$ and  a function $h: B_{3/4} \to \mathbb{R}$ satisfying
    $$
        F(D^2 h) = 0 \quad \text{in} \quad B_{3/4}
    $$
    in the viscosity sense, $0 \in \mathcal{C}(h)$, and 
    $$
        \sup_{B_{1/2}} |v - h | < \delta_{*}
    $$
    for any function, $v$, satisfying \eqref{main eq intr approx} and \eqref{small regime}, with $\eta = \eta_{*}$. In view of the optimal $C_{loc}^{1,\alpha_0}$ interior regularity of $h$ ( see e.g., \cite{Caff, Teix1}), along with the fact that $ h(0) =  | \nabla h(0) |= 0  $, there exists a constant $C_0  = C_0(n, \lambda, \Lambda) > 0$ such that
$$
\sup_{B_r} |h(x)| \le \ C_0 r^{1 + \alpha_0}, \quad \forall 0 < r \le 1/2. 
$$
 Next, with $\beta$ fixed as in \eqref{sharp exponent beta}, we make the following universal choices:   
    \begin{equation}\label{choice of rho}
   0 < \rho \le \left( \frac{1}{2C_0} \right)^{\frac{1}{\alpha_0 - \beta}} \quad \text{and} \quad \delta_{*} := \frac{1}{2} \rho^{1 + \beta}. 
\end{equation}
To finish up, assuming that \eqref{smallness k=1} and \eqref{gradient smallnes k=1} hold true for the universal choices of $\eta_{*}$ and $\rho$ made above  (recall that the choice of $\delta_{*} $ determines $\eta_{*}$ through the compactness Lemma \eqref{Approx lemma})  we have, by triangular inequality, that 
\begin{eqnarray}\label{estimate for u}
    \sup\limits_{B_{\rho}} | u(x) | &\le&    \sup\limits_{B_{\rho}} | u(x) - h(x) | +  \sup\limits_{B_{\rho}} \left| h(x) \right| \nonumber \\
    &\le& \delta_{*} + C_0\rho^{1 +\alpha_0} \nonumber \\
    &\le& \frac{1}{2} \rho^{1 + \beta} + \frac{1}{2} \rho^{1 + \beta}.
\end{eqnarray}
 In conclusion, we have established
    \begin{equation}\label{iteration k=1 } 
         \sup\limits_{B_{\rho}} | u(x)  | \le \rho^{1 + \beta}. 
    \end{equation}
\end{proof}

Next, we obtain the main result of this subsection.

\begin{proposition}\label{Reg for small gradient}
    Let $u \in C(B_1)$ be a normalized viscosity solution to \eqref{main eq intr}, with $u(0) = 0.$ There exists small positive universal numbers $\rho$ and $\eta_{*}$ such that if
$$
    |\nabla u(0) | \le \eta_{*} t^{\beta}
$$
for some $0 < t \le \rho$, then  
\begin{equation}
    \sup\limits_{B_{t}} | u(x) | \le C t^{(1 + \beta)}
\end{equation}
for a universal constant $C = C(\rho, \beta) > 0.$
\end{proposition}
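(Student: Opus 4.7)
The plan is to iterate Lemma \ref{discrete reg} along the geometric sequence of radii $\{\rho^j\}_{j \ge 0}$, where $\rho$ and $\eta_{*}$ are the universal constants supplied by that lemma. Given $t \in (0, \rho]$ with $|\nabla u(0)| \le \eta_{*} t^{\beta}$, I choose the unique integer $k \ge 1$ with $\rho^{k+1} < t \le \rho^k$ and aim for the discrete decay
$$
\sup_{B_{\rho^j}} |u| \le \rho^{j(1+\beta)}, \qquad j = 0, 1, \dots, k+1. \qquad (\star)
$$
Once $(\star)$ is in hand, sandwiching $t$ between consecutive powers of $\rho$ immediately yields $\sup_{B_t}|u| \le \sup_{B_{\rho^k}} |u| \le \rho^{k(1+\beta)} \le \rho^{-(1+\beta)} t^{1+\beta}$, which is the stated conclusion with $C = \rho^{-(1+\beta)}$.

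I would prove $(\star)$ by induction on $j$; the base case $j = 0$ is immediate from the normalization $|u| \le 1$. For the inductive step at level $j \le k$, define
$$
v_j(x) := \rho^{-j(1+\beta)} u(\rho^j x), \qquad x \in B_1.
$$
The inductive hypothesis guarantees $\|v_j\|_{L^{\infty}(B_1)} \le 1$ and $v_j(0) = 0$. Applying the scaling computations from the preamble of Section \ref{sct improved gradient reg} with $\kappa = \rho^{-j(1+\beta)}$ and $r = \rho^j$, the function $v_j$ is a viscosity solution of an equation of the same structural form
$$
\tilde{\mathcal{D}}_j(x, \nabla v_j)\, \tilde{F}_j(x, D^2 v_j) = \tilde{f}_j(x) \quad \text{in } B_1,
$$
in which $\tilde{F}_j$ retains the $(\lambda,\Lambda)$-ellipticity with oscillation bound $\omega_{\tilde{F}_j}(x,y) \le K_1 \rho^{j\bar{\alpha}}|x - y|^{\bar{\alpha}}$, $\tilde{\mathcal{D}}_j$ continues to verify \eqref{degener cond}, and a direct computation gives $|\tilde{f}_j(x)| \le K_2 \rho^{j[1+\theta - \beta(1+\gamma)]}|x|^{\theta}$.

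The crux is then to verify the smallness hypotheses of Lemma \ref{discrete reg} for $v_j$. The exponent $1 + \theta - \beta(1+\gamma)$ is nonnegative by the sharp range \eqref{sharp exponent beta}, so $\|\tilde{f}_j\|_{L^{\infty}(B_1)}$ stays uniformly controlled, and after a one-time harmless rescaling of the original data the ambient oscillation and source size may be taken small enough to secure $\|\omega_{\tilde{F}_j}\|_{L^{\infty}} + \|\tilde{f}_j\|_{L^{\infty}} < \eta_{*}$ for all $j$. The decisive input is the gradient smallness at the origin:
$$
|\nabla v_j(0)| = \rho^{-j\beta}\,|\nabla u(0)| \le \rho^{-j\beta}\, \eta_{*} t^{\beta} \le \eta_{*} \rho^{(k - j)\beta} \le \eta_{*},
$$
where the last two inequalities use $t \le \rho^k$ and $j \le k$. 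Lemma \ref{discrete reg} then delivers $\sup_{B_\rho}|v_j| \le \rho^{1+\beta}$, which unwinds back to $\sup_{B_{\rho^{j+1}}}|u| \le \rho^{(j+1)(1+\beta)}$, closing the induction.

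The principal technical obstacle I anticipate is the uniform bookkeeping of the structural constants under the iterated rescaling---in particular keeping the degeneracy law \eqref{degener cond} and the H\"older-like source bound \eqref{holder cont of f} preserved with universal constants as $j$ grows. The compatibility inequality $\beta(1+\gamma) \le 1 + \theta$ built into \eqref{sharp exponent beta} is precisely what prevents $\tilde{f}_j$ from inflating along the iteration, while the complementary requirement $\beta < \alpha_0$ is what makes the approximation step inside Lemma \ref{discrete reg} (via the $C^{1,\alpha_0}$ theory for the limiting equation $\mathcal{F}(D^2 h) = 0$) quantitative enough to power the bootstrap.
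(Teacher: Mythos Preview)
Your proposal is correct and follows essentially the same route as the paper: iterate Lemma~\ref{discrete reg} along the scales $\rho^j$ via the rescaled functions $v_j(x)=\rho^{-j(1+\beta)}u(\rho^j x)$, using $|\nabla u(0)|\le \eta_* t^\beta \le \eta_* \rho^{j\beta}$ to recover the gradient-smallness at each step and the sharp choice $\beta(1+\gamma)\le 1+\theta$ to keep the rescaled source bounded; the paper organizes the induction slightly differently (phrasing it as ``\eqref{grad iteration} $\Rightarrow$ \eqref{iteration} for all $k$'' before specializing to the dyadic interval containing $t$) but the content is identical, including the preliminary ``one-time'' reduction to $\|\omega_F\|_\infty+\|f\|_\infty\le \eta_*$ and the final constant $C=\rho^{-(1+\beta)}$.
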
 
\begin{proof}
    Initially we note that, by scaling and normalization,  we may assume that $\| \omega_F\|_{\infty} + \| f\|_{\infty} \le \eta_{*}$, where $\eta_{*}$ is the universal constant from Lemma \ref{discrete reg}. Indeed, for constants $\kappa  > 0$, and $ r \in (0,1)$ chosen so that 
    $$
         \| \omega_{F}\|_{L^{\infty} (B_r)} \le \eta_{*} /2 
    $$
    and 
    $$
        \kappa:= \frac{\eta_{*}/2}{ \| u \|_{L^{\infty}(B_1)} + r^{\frac{2 + \gamma}{1 + \gamma}} \|f\|_{L^{\infty}(B_1)}^{\frac{1}{1 + \gamma}}},
    $$
    we define the function $\tilde{u} (x) = \kappa u(r x)$, $x \in B_1$. Then $\tilde{u}$ solves 
    $$
    \tilde{\mathcal{D}}(x, \nabla \tilde{u}) \tilde{F} (x, D^2 \tilde{u}) = \tilde{f}(x)
 $$
 where 
 $$
    \tilde{F} (x, M) = \kappa r^2 F(rx , \kappa^{-1} r^{-2} M ),
$$ 
is a uniformly elliptic operator with same ellipticity constants as $F$. 
$$
    \tilde{\mathcal{D}}(x, \vec{b} ) = (\kappa r)^{\gamma} \mathcal{D}(rx, (\kappa r)^{-1}\vec{b} )
$$ satisfies the degeneracy condition \eqref{degener cond}, with the same constants. And 
$$
    \tilde{f}(x) = r^{2 + \gamma} \kappa^{1 + \gamma} f(rx).
$$ 
It is easily checked now that $\tilde{u}(0) = 0$, $| \tilde{u} | \le 1$, and 
$$ 
    \left( \| \omega_{\tilde{F}} \|_{L^{\infty}(B_1)} + \| \tilde{f} \|_{L^{\infty}(B_1)} \right) \le \eta_{*}. 
$$ 
The strategy of the proof is to iterate Lemma \ref{discrete reg}. That is, we want to show that if
\begin{equation}\label{grad iteration}
    |\nabla u(0) | \le \eta_{*} \rho_{*}^{k\beta}
\end{equation}
for some $k \in \mathbb{N}$, then there holds
\begin{equation}\label{iteration}
    \sup\limits_{B_{\rho^{k}}} | u(x) | \le \rho^{k(1 + \beta)}
\end{equation}
where $\rho$ is the radius granted also from lemma \ref{discrete reg}. We will prove that for each $k\in \mathbb{N}$, \eqref{grad iteration} implies \eqref{iteration} using an induction process. The step $k = 1$ follows directly from Lemma \ref{discrete reg}. We now proceed to the induction step. 

Assume that \eqref{grad iteration} implies \eqref{iteration} for $1, \dots, k$. Then, let $v_k : B_{1} \to \mathbb{R}$ be defined as
    $$
        v_k(x) : = \frac{u(\rho^{k} x) }{\rho^{k(1 + \beta)}} .
    $$
    Thus $v_k$ is a normalized solution of 
    $$
        \mathcal{D}_k (x, \nabla v_k) F_k(x, D^2 v_k)   = f_k(x)
    $$
    where
    $$
      F_k(z,N) = \rho^{ k(1- \beta)} F( \rho^{k} z, \rho^{-k(1- \beta)}N  )
$$
is another $(\lambda, \Lambda)$-uniformly elliptic operator,

$$
     \mathcal{D}_k ( z, \vec{\xi}) =  \rho^{- k\beta\gamma} \mathcal{D}( \rho^{k} z, \rho^{k\beta } \vec{\xi}  ) ,
$$
and, 
$$
    f_k (x) = \rho^{ k( 1 - \beta(1 + \gamma) )} f(\rho^{k} x) .
$$
Thus, using \eqref{contin of coeff}, we estimate 
$$
     \|\omega_{F_{k}} \|_{L^{\infty}(B_1)} \le K_1\rho^{\bar{\alpha} k}
$$
and using \eqref{holder cont of f},
$$
     \|f_k(x)\|_{L^{\infty}(B_1)} \le K_2 \rho^{ k[1 + \theta - \beta(1 + \gamma)]} \le K_2 \rho
$$
since the choice of $\beta$  yields $ 1 + \theta - \beta(1 + \gamma) \ge 0$.

Next, since  by hypotheses of induction, we have  $|\nabla u(0)| \le \eta_{*} \rho_{*}^{k\beta}$, then 
$$
    |\nabla v_k( 0 ) | \le \eta_{*}. 
$$
Therefore, $v_{k}$ is entitle to Lemma \eqref{discrete reg}, and  hence we  obtain 
\begin{eqnarray}
    \sup\limits_{B_{\rho}} | v_k(x)  | \le \rho^{(1 + \beta)} .
\end{eqnarray}
Scaling back to $u$ we have, 
$$
    \sup\limits_{B_{\rho^{k+1}}} | u(x)  | \le \rho^{(k+1)(1 + \beta)} .
$$
Thus, the induction step is proven. Next, to finish up, given $0 < t \le \rho$, let $k \in \mathbb{N}$ be so that $\rho^{k+1} < t \le \rho^k$. Thus, if 
$$
    |\nabla u(0)| \le \eta_{*} t^{\beta},
$$
then $|\nabla u (0)| \le \eta_{*}\rho^{k \beta}$, and hence, 
\begin{eqnarray}
    \sup\limits_{B_t} | u(x)| &\le& \sup\limits_{B_{\rho^k}} | u(x)| \nonumber \\
    & \le& \rho^{k(1 + \beta)} \nonumber \\
    &\le& \left( \rho^{-(1+\beta)} \right) t^{1 + \beta}.
\end{eqnarray}
\end{proof}

\subsection{Sharp regularity} In this subsection, we conclude the proof of Theorem \ref{main result}. First, by translation and normalization, we may assume $u(0) = 0$ and $\| u\|_{L^{\infty}(B_1)} \le 1$. 

Let $\eta_*, \rho > 0$ be the numbers granted by proposition \ref{Reg for small gradient}.  Following the dichotomy strategy, we  consider several cases:

\textbf{Case 1:} $|\nabla u(0)| \le  \eta_{*} \rho^{2\beta}$. 

In this case, we define the number 
\begin{eqnarray}\label{gradient parameter}
    \iota:= \left( \frac{| \nabla u(0) |}{\eta_{*}} \right)^{1/ \beta}.
\end{eqnarray}
Given   $t \le \rho^2$, there exists $k \in \mathbb{N}$ such that $\rho^{k+2} < t \le \rho^{k+1}$. We consider the sub-cases:

\textbf{Case 1.i)} $ \iota \le  t \le \rho^2$. Then, by \eqref{gradient parameter}, we have the following control: 
$$
    |\nabla u(0) | \le \eta_{*} t^{\beta} \le \eta \rho^{k\beta} .
$$
Thus, by Proposition \ref{Reg for small gradient}, 
\begin{eqnarray}
    \sup\limits_{B_t} |u(x) |  &\le& C t^{1 +\beta}
\end{eqnarray}
with $C  > 0$ universal.  Hence, 
\begin{eqnarray}
    \sup\limits_{B_t} |u(x) - \nabla u(0) \cdot x| &\le& \sup\limits_{B_t} |u(x) |  + |\nabla u(0)|  t \nonumber \\ 
    & \le& (C + \eta_{*}) t^{1+ \beta}. \nonumber
\end{eqnarray}
That is, $u$ is of class $C^{1, \beta}$ at $0$. 

\textbf{Case 1.ii)} $0 < t < \iota \le \rho^2$. In this case we consider the scaled function 
$$
    v_{\iota} (x): = \frac{u(\iota x)}{\iota^{1 + \beta}} , \quad x \in B_1 .
$$
It follows from \eqref{gradient parameter} that 
$$
    |\nabla u(0) | = \eta_{*} \iota^{\beta}. 
$$ 
and by proposition \ref{Reg for small gradient},
$$
    \sup\limits_{B_1} |v_{\iota} (x)| = \sup\limits_{B_{\iota}} \left| \frac{u(\iota x)}{\iota^{1 + \beta}} \right| \le C.
$$
Therefore, $v_{\iota}$ is a bounded viscosity solution to 
\begin{equation}\label{kappa equation}
    \tilde{\mathcal{D}}(x, \nabla v_{\iota} ) \tilde{F}( x, D^2 v_{\iota} ) = f_{\iota}(x)
\end{equation}
where $\tilde{F}$ is another $(\lambda, \Lambda)$-elliptic operator. As before, $\tilde{\mathcal{D}}(x, \vec{\xi}) $ still satisfies the degeneracy condition \eqref{degener cond}, and $f_{\iota} (x) = \iota^{1 - \beta + \gamma \beta} f(\iota x)$. Note that in view of \eqref{holder cont of f}, 
$$
    \| f_{\iota} (x)\|_{L^{\infty}(B_1)} \le K_2 \iota^{1 + \theta - \beta(1 + \gamma)} \le K_2.
$$
That is, the right hand side of \eqref{kappa equation} is universally bounded. Therefore, from the $C^{1,+}$ interior  regularity available for $v_{\iota}$, and the fact the  $|\nabla v_{\iota} (0) | = \eta_{*} > 0$, the we can find a universal small radius $r > 0$ (independent of $\iota$)  such that 
$$
    \frac{\eta_{*}}{2} \le |\nabla v_{\iota} (x)| \le 2 \eta_{*}, \quad \text{in} \quad B_r .
$$
Hence, $v_{\iota}$ satisfies the following uniformly elliptic equation with bounded right-hand side
$$
    \tilde{F} (x, D^2 v_{\iota} ) = \tilde{\mathcal{D}}(x, \nabla v_{\iota} )^{-1} f_{\iota}(x), \quad \text{in} \quad B_{r}.
$$
Therefore, since by the regularity results established in \cite{Teix1}, solutions to the above equation are almost as regular as solutions to a constant coefficient, homogeneous equation, i.e., they are $C_{loc}^{1,\alpha_0^{-}}$. In particular,  we obtain 
$$
    \sup\limits_{B_{\tau}} |v_{\iota} (x) - \nabla v_{\iota} (0) \cdot x | \le C \tau^{1 + \beta}
$$
for all $0 < \tau \le \frac{r}{2}.$ Hence, 
\begin{equation}\label{almost there equation}
    \sup\limits_{B_t} | u(x) - \nabla u(0) \cdot x| \le C t^{1 + \beta}
\end{equation}
for all $0 < t \le \iota \frac{r}{2}$. 

We now extend \eqref{almost there equation} to the interval $\displaystyle \left( \iota \frac{r}{2} , \iota \right)$.  For $t \in \left( \iota \frac{r}{2} , \iota \right)$, then by case (1.i) with $t= \iota$, we have 
\begin{eqnarray}
    \sup\limits_{B_t} | u(x) - \nabla u(0) \cdot x| &\le & \sup\limits_{B_{\iota}} | u(x) - \nabla u(0) \cdot x| \nonumber \\
    &\le& C \iota^{1 + \beta} \nonumber \\
    &\le& C \left( \frac{2}{r} \right)^{1 + \beta} t^{1 + \beta},
\end{eqnarray}
and this concludes the proof for case 1. 

\textbf{Case 2:} $|\nabla u(0)| > \eta_{*} \rho^{2\beta}$. In this case, we consider the function
$$
    \tilde{u} (x) = \frac{\eta_{*}\rho^{2\beta}}{|\nabla u(0)|} u(x), 
$$
and then are back to case 1. 
 \section{Hessian continuity at local extrema}\label{sct hessian cont}
 \subsection{Flatness estimate} \label{hessian appr}

Once more, example \eqref{example of hessian cont} enlightens us with the idea that, at certain points,  we can achieve even higher regularity. Indeed, in that example, the origin is a minimum point of the solution, at which the source function vanishes. In this section, we formally prove this observation.  That is, we now show that at a local extrema point solutions are $C^{2,\varepsilon}$ provided that,  at that point, the source function vanishes at a bigger rate than ellipticity degenerates. 

The first step is to prove a flatness improvement result at the local extrema. However, unlike Lemma \ref{Approx lemma}, which establishes the proximity between functions within the space of solutions, the subsequent lemma asserts that solutions exhibit nearly constant behavior near an extremum point.

\begin{lemma}[Flatness estimate]\label{Flatness estimate}
 Let $\varphi \in C(B_1)$ be a normalized viscosity solution to 
\begin{equation}
\mathcal{D}(x, \nabla \varphi ) F(x, D^2 \varphi ) =  f(x) \quad \text{in} \quad B_1,
\end{equation}
for some $(\lambda, \Lambda)$-elliptic operator $F:B_1 \times Sym(n) \to \mathbb{R}^n$. Assume, that $x_0 \in B_{1/2}$ is a local minimum. Then, given $\eta > 0$ and  there exists $\rho (\eta) > 0$ depending only upon $\eta, n, \lambda, \Lambda$,  such that if   
$$
   \| \omega_F\|_{\infty} +  \| f \|_{\infty} \le \delta,
$$
 holds for $0 < \delta \le \rho(\eta)$, then 
\begin{equation}
    \sup\limits_{B_{1/8}(x_0)} \left( \varphi - \varphi(x_0) \right) \le \eta .
\end{equation}
\end{lemma}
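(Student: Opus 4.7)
The plan is a contradiction-and-compactness argument in the spirit of Lemma \ref{Approx lemma}, closed off with the strong minimum principle at the limit. Suppose the statement fails. Then I can fix $\eta_0 > 0$ and extract sequences of operators $F_k$, degeneracy laws $\mathcal{D}_k$, sources $f_k$, normalized viscosity solutions $\varphi_k$, and local-minimum points $x_k^0 \in B_{1/2}$ with $\|\omega_{F_k}\|_{L^\infty(B_1)} + \|f_k\|_{L^\infty(B_1)} \to 0$, yet
$$
   \sup_{B_{1/8}(x_k^0)} \bigl(\varphi_k - \varphi_k(x_k^0)\bigr) \ge \eta_0.
$$
By the universal $C^{1,+}_{\mathrm{loc}}$ regularity available for solutions of \eqref{main eq intr} (see \cite{IS, ART}), the family $\{\varphi_k\}$ is precompact in $C^{1,+}_{\mathrm{loc}}(B_1)$. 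After extracting subsequences, $\varphi_k \to \varphi_\infty$ locally uniformly, $x_k^0 \to x_\infty \in \overline{B_{1/2}}$, and $F_k(x,\cdot) \to F_\infty(\cdot)$ locally uniformly, with $F_\infty$ a constant-coefficient $(\lambda,\Lambda)$-elliptic operator.

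Next I pass to the limit in the PDE. As in the proof of Lemma \ref{Approx lemma}, the upper bound in \eqref{degener cond} yields
$$
   \tilde\lambda\,|\nabla \varphi_k|^\gamma\,|F_k(x, D^2\varphi_k)| \;\le\; |f_k(x)| \;\longrightarrow\; 0,
$$
so by stability of viscosity solutions $\varphi_\infty$ solves $|\nabla \varphi_\infty|^\gamma F_\infty(D^2 \varphi_\infty) = 0$, and \cite[Lemma 6]{IS} upgrades this to $F_\infty(D^2 \varphi_\infty) = 0$ in $B_{3/4}$. Meanwhile, the inequality $\varphi_k(x_k^0)\le \varphi_k(y)$, valid for $y$ near $x_k^0$, passes to the limit by uniform convergence, so $x_\infty$ is a local minimum of $\varphi_\infty$. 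Setting $w:=\varphi_\infty - \varphi_\infty(x_\infty)$, one obtains a viscosity solution of the uniformly elliptic equation $F_\infty(D^2 w) = 0$ which is nonnegative on a neighborhood of $x_\infty$ and vanishes at $x_\infty$. The strong minimum principle for fully nonlinear uniformly elliptic equations (see \cite{Caff-Cabre}) then forces $w\equiv 0$ on that neighborhood, in particular on $B_{1/8}(x_\infty)$.

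To close the argument, local uniform convergence of $\varphi_k$ together with $x_k^0\to x_\infty$ gives
$$
   \sup_{B_{1/8}(x_k^0)} \bigl(\varphi_k - \varphi_k(x_k^0)\bigr) \;\longrightarrow\; \sup_{B_{1/8}(x_\infty)} \bigl(\varphi_\infty - \varphi_\infty(x_\infty)\bigr) \;=\; 0,
$$
contradicting the lower bound $\eta_0$. The main delicacy is guaranteeing that the neighborhood on which $w\geq 0$ has size bounded below independently of $k$, so that the strong minimum principle actually forces $w$ to vanish on a ball covering $B_{1/8}(x_\infty)$. This is harmless once one interprets ``$x_0 \in B_{1/2}$ is a local minimum'' as minimality on a fixed-size neighborhood inside $B_1$ (for instance $B_{1/4}(x_0)$), which is the regime dictated by the subsequent application to Theorem \ref{hessian cont}. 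The other conceptual point, the passage from the degenerate PDE to the uniformly elliptic $F_\infty(D^2 w)=0$, is precisely what \cite[Lemma 6]{IS} provides, and it is what makes the strong minimum principle available at the limit.
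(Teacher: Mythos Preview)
Your argument follows the paper's proof essentially step for step: the same contradiction setup, $C^{1,+}_{\mathrm{loc}}$ compactness from \cite{ART}, stability of viscosity solutions to reach $|\nabla\varphi_\infty|^\gamma F_\infty(D^2\varphi_\infty)=0$, the cancellation via \cite[Lemma 6]{IS}, and the strong minimum principle at the limit to contradict the lower bound. You are in fact slightly more careful than the paper on the one genuine delicacy---the need for the local-minimum neighborhood to have size bounded below uniformly in $k$---which the paper glosses over by asserting $\varphi_\infty\equiv\text{const}$; your reading of ``local minimum'' as minimum over a fixed-size ball is exactly what the application in Theorem \ref{hessian cont} supplies after scaling.
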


\begin{proof}
Suppose, by contradiction, that there exists $\eta_0 > 0$ and  sequences of functions $\{ \varphi_k\},$  a sequence of $(\lambda, \Lambda)$-elliptic operators $F_k : B_1 \times Sym(n) \to \mathbb{R}$, a sequence $x_k \in B_{1/2}$ of local minimum of $\varphi_k$, and a linked sequence of functions $\{f_k\},$ such that 

    \begin{equation}\tag{i}
         -1 \le \varphi_k \le 1 
    \end{equation}

     \begin{equation}\tag{ii}
         \mathcal{D}_k (x, \nabla \varphi_k ) F_{k} (x, D^2 \varphi_k ) = f_k(x)  \quad \text{in} \quad  B_1
     \end{equation}
 in the viscosity sense. And 
 \begin{equation}\tag{iii}
      \| \omega_{F_k}\|_{\infty} +  \| f_k \|_{\infty} \le \frac{1}{k} .
 \end{equation}
 However, 
    \begin{equation}\label{flat contrad eq}
             \sup\limits_{B_{1/8}(x_0)} \left( \varphi_k - \varphi_{k}(x_k) \right) \ge \eta_0 > 0, 
    \end{equation} 
   
Next, arguing as in Lemma \ref{Approx lemma}, the sequence $\{ \varphi_k \}_{k \in \mathbb{N} }$ is pre-compact in $ C_{loc}^{1,+} (B_1)$. Thus, passing to a subsequence if necessary, $\varphi_k $ converges locally uniformly in $B_1$ to a continuous function  $\varphi_{\infty}$ in the $C^{1, +}$ topology.  
By uniform ellipticity and (iii),  $F_k (x, M) \to F_{\infty}(M)$ locally uniformly in  $B_1 \times Sym(n)$ to some constant coefficient, $(\lambda, \Lambda)$-operator. And again,  using the degeneracy condition \eqref{degener cond}, we have 
\begin{equation}\label{seq ineq}
    \left\{\begin{matrix}
\lambda | \nabla \varphi_k |^{\gamma} | F_k( x, D^2 \varphi_k ) | \le  |f_{k} (x)| \\ 
        \\
 \Lambda | \nabla \varphi_k  |^{\gamma} | F_k( x, D^2 \varphi_k ) | \ge  |f_{k} (x)| .
\end{matrix}\right.
\end{equation}
 By stability of viscosity solutions, passing the limit as $k \to \infty$ in \eqref{seq ineq} we see that $\varphi_{\infty}$ is a normalized solution to 
\begin{equation}\label{limit eq}
    |\nabla \varphi_{\infty} |^{\gamma} F_{\infty} (D^2 \varphi_{\infty} )  =0
\end{equation}
    in the viscosity sense, which in view of \cite[Lemma 6]{IS} implies
    \begin{equation}\label{equa after cancelation}
        F_{\infty} ( D^2 \varphi_{\infty} ) = 0 \quad \text{in} \quad B_{2/3} ,
    \end{equation}
    in the viscosity sense.  Furthermore, $x_k \to x_{\infty}$ and, by the uniform convergence, $x_{\infty}$ is a local minimum of $\varphi_{\infty}$. Therefore, by strong maximum principle, we have $\varphi_{\infty} \equiv \text{constant}$,  and hence we get a contradiction with \eqref{flat contrad eq} for $k$ sufficiently large. 
\end{proof}

 \begin{remark}
     If $\mathcal{D}(x, p) = |p|^{\gamma}$, then no assumption on the coefficients of $F$ is needed. Indeed, solutions to 
     $$
        |\nabla \varphi|^{\gamma} F(x, D^2 \varphi ) = f(x)
     $$
     are solution to 
     $$
         |\nabla \varphi|^{\gamma} \mathcal{M}_{\lambda, \Lambda}^{-} (D^2 \varphi) \le |f| \quad \text{and} \quad  |\nabla \varphi|^{\gamma}\mathcal{M}_{\lambda, \Lambda}^{+} (D^2 \varphi ) \ge - |f|.
     $$
     Therefore, stability of viscosity solutions allow us the same conclusion as in \eqref{equa after cancelation}. 
 \end{remark}
\subsection{Higher regularity}\label{itera flat}

We want to show that $u$ is  $C^{2, \varepsilon}$ at the origin, for the sharp $\varepsilon$ defined as in \eqref{varepsilon}.  Recall that now we are assuming that $0$ is a local minimum of $u$ and 
$$
   | u| \le 1 \quad \text{and} \quad |f(x)| \le K_2 |x|^{\theta} ,
$$ which are  key hypothesis of Theorem \ref{hessian cont}. 

\begin{proof}[Proof of Theorem \ref{hessian cont}]
As before, there is no loss of generality in assuming that $u(0) = 0$. For $ 0 < \rho_{*} \le 1/8$ to be chosen later, consider the scaled,  normalized function $v(x) = u( \rho_{*} x) $ defined in $B_1$. It is easily checked that $v$ satisfies
    $$
        \mathcal{D}_{\rho} (x, \nabla v) F_{\rho} ( x, D^2 v) = f_{\rho}(x) \quad \text{in}  \quad  B_1
    $$
where $F_{\rho} (x, M) = \rho^2 F(\rho x, \rho^{-2} M )$, is a $(\lambda, \Lambda)$- uniformly elliptic operator,  $\mathcal{D}_{\rho} (x, \vec{\xi} ) = \rho^{\gamma} \mathcal{D}(\rho x, \rho^{-1} \vec{\xi} )$ still satisfy the degeneracy condition \eqref{degener cond} with the same constants, and finally 
$$
    f_{\rho}(x)=\rho^{2 + \gamma} f(\rho x).
$$ Hence, using \eqref{contin of coeff} and \eqref{holder cont of f}, we have 
$$
\|\omega_{F_{\rho} }\|_{L^{\infty}(B_1)} \le K_1 \rho^{\bar{\alpha} } \quad \text{and} \quad \|f_{\rho} \|_{L^{\infty}(B_1)} \le  K_2\rho^{(2 + \gamma + \theta)} . 
$$
Next, we select $\eta = 8^{-(2 + \varepsilon)}$ and let  $\rho(\eta)$ be the correspondent smallness regime provided by Lemma \ref{Approx lemma}. With this $\rho(\eta) $ in hands, we choose 
\begin{equation}\label{small regime loc min}
    0 < \rho_{*} \le \min \left\{ \left( \frac{\rho(\eta)}{K_1} \right)^{\frac{1}{\bar{\alpha}}} , \left( \frac{\rho(\eta)}{K_2} \right)^{\frac{1}{2 + \gamma + \theta}} \right\}. 
\end{equation}
With these choices, $v$ is under the assumptions of Lemma \ref{Flatness estimate}, for $\eta = 8^{-(2 + \varepsilon)}$. Thus, 
$$
    \sup\limits_{B_{1/8}}  v(x)  \le 8^{-(2 + \varepsilon)} . 
$$
In the sequel, let us define $v_2: B_1 \to \mathbb{R}$ by 
$$
    v_2(x) = 8^{2 + \varepsilon}  v \left(\frac{x}{8} \right)   .
$$
Then $0$ is still a local minimum for $v_2$, and one readly verifies that 
$$
    | v_2 | \le 1  
$$
and 
$$
|\nabla v_2 |^{\gamma} \tilde{F}_{\rho} (x, D^2 v_2)= \tilde{f}_{\rho}(x), 
$$
where $ \tilde{F}_{\rho} (x, M) = 8^{\varepsilon}F_{\rho}( 8^{-1}x, 8^{-\varepsilon} M ) $, $\tilde{\mathcal{D}}_{\rho} (x, \vec{\xi} ) = 8^{(1+\varepsilon)\gamma} \mathcal{D}_{\rho}(\rho x, 8^{-(1 +\varepsilon)} \vec{\xi} )$ ,  and 
$$
      \tilde{f}_{\rho}(x)=8^{\varepsilon + (1 +\varepsilon)\gamma} \rho_{*}^{2 +\gamma} f_{\rho} \left( \frac{\rho_{*}x}{8} \right) .
$$
Because of the sharp choice of $\varepsilon$ we have 
$$
   \varepsilon + (1 +\varepsilon)\gamma = \theta  
$$
and hence, using conditions \eqref{contin of coeff} and  \eqref{holder cont of f}, we still obtain 
$$
   \|\tilde{\beta}_{F_{\rho}}\|_{\infty} \le K_1 \rho_{*}^{\bar{\alpha}} \quad \text{and} \quad \| \tilde{f}_{\rho} \|_{\infty} \le K_2 \rho_{*}^{2 + \gamma + \theta} . 
$$
Indeed, for any $x \in B_1$ and $M \in Sym(n)$ we have 
 \begin{eqnarray}
    \left| \frac{8^{\varepsilon} \left( F_{\rho} (8^{-1} x, 8^{-\varepsilon}M) - F_{\rho}(0, 8^{-\varepsilon} M) \right)}{1 + \|M \| } \right| &\le& \left| \frac{8^{\varepsilon} \rho^2 \left(  F (8^{-1} \rho x, 8^{-\varepsilon}\rho^{-2} M) -  F(0, 8^{-\varepsilon} \rho^{-2} M) \right)}{1 + \|M \| } \right| \nonumber \\
    &\le& K_1 \frac{8^{\varepsilon} \rho^2 (1 + 8^{-\varepsilon}\rho^{-2} \|M\|)\,  |8^{-1} x|^{\bar{\alpha}}}{ 1 + \| M\| } \nonumber \\
    &\le& K_1 \frac{(8^{\varepsilon} \rho^2  +  \|M\| )\,  |8^{-1} x|^{\bar{\alpha}}}{ 1 + \| M\| } \nonumber \\
    &\le& \rho_{*}^{\bar{\alpha}} K_1 |x|^{\bar{\alpha}},
\end{eqnarray}
since $\rho_{*} \le 8^{-1}$  yields $8^{\varepsilon} \rho_{*}^{2} \le 8^{\varepsilon - 2 } \le 1.$ And,  
\begin{eqnarray}
     | \tilde{f}_{\rho}(x)| &=& \left| 8^{\varepsilon + (1 +\varepsilon)\gamma} \rho_{*}^{2 +\gamma} f_{\rho} \left( \frac{\rho_{*}x}{8} \right) \right|  \nonumber \\
     &\le& 8^{\varepsilon + (1 +\varepsilon)\gamma} \rho_{*}^{2 +\gamma} K_2 \left( \frac{\rho_{*}^{2}}{8} \right)^{^{\theta}} \nonumber \\
     &\le&  K_2 8^{\varepsilon + (1 +\varepsilon)\gamma - \theta} \rho_{*}^{2 + \gamma + \theta} \nonumber \\
     &=& K_2 \rho_{*}^{2 + \gamma + \theta} . 
\end{eqnarray}
That is, $v_2$ too is under the hypothesis of Lemma \ref{Flatness estimate}. Hence, 
\begin{equation}\label{iter step 2}
    \sup\limits_{B_{1/8}} v_2 (x) \le 8^{-(2 + \varepsilon)} . 
\end{equation}
Re-scaling \eqref{iter step 2} back to $v$ yields
\begin{equation}
    \sup\limits_{B_{1/64}} v(x) \le 8^{-2(2 + \varepsilon)} .
\end{equation}
Iterating inductively the above reasoning gives the following geometric decay:
\begin{equation}
    \sup\limits_{B_{8^{-k}}} v(x) \le 8^{-k(2 + \varepsilon)} .
\end{equation} 
Finally, given $0 < r \le \frac{\rho_{*}}{8}$, there exists $ k \in \mathbb{N}$, such that $8^{-(k+1)} < \frac{r}{\rho_{*}} \le 8^{-k}$,  therefore, 
\begin{eqnarray}
    \sup\limits_{B_r} u(x) &\le& \sup\limits_{B_{\frac{r}{\rho_{*}} }} v(x) \nonumber \\ 
    &\le& \sup\limits_{B_{8^{-k}}} v(x) \nonumber \\
    &\le& 8^{-k(2 + \varepsilon)} \nonumber \\
    &\le& \left( \frac{8}{\rho_{*}} \right)^{(2 + \varepsilon)} r^{2 + \varepsilon} \nonumber \\
    &= C r^{2 + \varepsilon} ,
\end{eqnarray}
where $C = C(n, \lambda, \Lambda, K_1, K_2, \bar{\alpha}, \gamma, \theta )$. Thus, $u$ is $C^{2,\varepsilon}$ differentiable at $0$, with $Du(0) = D^2 u(0) = 0. $
\end{proof}


\noindent \textbf{Acknowledgement.}
The author is grateful to Eduardo Teixeira for his insightful comments which significantly enhanced the final presentation of this article.




\noindent

\bibliographystyle{amsplain, amsalpha}

\end{document}